\newtheorem{theorem}{Theorem}
\newtheorem{lemma}{Lemma}
\newtheorem{problem}{Problem}
\newtheorem{corollary}{Corollary}
\def\sym{\mathcal{S}}
\def\F{\mathbb{F}}
\def\C{\mathscr{C}}
\def\D{\mathscr{D}}
\def\Z{\mathbb{Z}}
\def\lcm{\mathrm{lcm}}
\renewcommand{\geq}{\geqslant}
\renewcommand{\leq}{\leqslant}
\renewcommand{\ge}{\geqslant}
\renewcommand{\le}{\leqslant}
\def\eref#1{$(\ref{#1})$}
\def\sref#1{\S$\ref{#1}$}
\def\lref#1{Lemma~$\ref{#1}$}
\def\tref#1{Theorem~$\ref{#1}$}
\def\pref#1{Problem~$\ref{#1}$}
\def\cyref#1{Corollary~$\ref{#1}$}
\begin{document}

\title{Existence results for cyclotomic orthomorphisms\footnote{Research supported by ARC grant DP150100506.}}

\author{David Fear and Ian M. Wanless\\
\small School of Mathematical Sciences\\[-0.8ex]
\small Monash University\\[-0.8ex]
\small VIC 3800 Australia\\
\small \texttt{\{david.fear,\ ian.wanless\}@monash.edu}
}

\date{}

\maketitle

\begin{abstract}
  An {\em orthomorphism} over a finite field $\F$ is a permutation
  $\theta:\F\mapsto\F$ such that the map $x\mapsto\theta(x)-x$ is also
  a permutation of $\F$.  The orthomorphism $\theta$ is {\em
    cyclotomic of index $k$} if $\theta(0)=0$ and $\theta(x)/x$ is
  constant on the cosets of a subgroup of index $k$ in the
  multiplicative group $\F^*$.  We say that $\theta$ has {\em least
    index} $k$ if it is cyclotomic of index $k$ and not of any smaller
  index. We answer an open problem due to Evans by establishing for
  which pairs $(q,k)$ there exists an orthomorphism over $\F_q$ that
  is cyclotomic of least index $k$.

  Two orthomorphisms over $\F_q$ are orthogonal if their difference is
  a permutation of $\F_q$.  For any list $[b_1,\dots,b_n]$ of indices
  we show that if $q$ is large enough then $\F_q$ has pairwise
  orthogonal orthomorphisms of least indices $b_1,\dots,b_n$. This
  provides a partial answer to another open problem due to Evans.  For
  some pairs of small indices we establish exactly which fields have
  orthogonal orthomorphisms of those indices.  We also find the number
  of linear orthomorphisms that are orthogonal to certain cyclotomic
  orthomorphisms of higher index.

\bigskip

\noindent Keywords: finite field, cyclotomic orthomorphism, orthogonal
orthomorphisms, Weil's Theorem.
\end{abstract}

\section{Introduction}

Suppose that $\F$ is a field of finite order $q\equiv1\mod k$.  An
{\em orthomorphism} over $\F$ is a permutation $\theta:\F\mapsto\F$ such
that the map $x\mapsto\theta(x)-x$ is also a permutation of
$\F$. Since the multiplicative group $\F^*$ is cyclic it has a unique
subgroup $C_{k,0}$ of index $k$. The {\em cyclotomy classes of index $k$}
are the cosets of $C_{k,0}$ in $\F^*$. The orthomorphism $\theta:\F\mapsto\F$ is 
{\em cyclotomic of index $k$} if $\theta(0)=0$ and $\theta(x)/x$ is
invariant on cyclotomy classes of index $k$.  
It is immediate from
the definition that an orthomorphism that is cyclotomic of index $k$
is also cyclotomic of index $k'$ for any $k'$ such that $k\mid k'$
and $k'\mid(q-1)$. Let $\C_k=\C_k(q)$ denote the set of
all cyclotomic orthomorphisms of index $k$ over $\F$, and define
\begin{equation}\label{e:Dk}
\D_k=\C_k\setminus\bigcup_{\ell<k}\C_\ell.
\end{equation}
Following Niederreiter and Winterhof~\cite{NW05}, whose work we build on,
we say that orthomorphisms in $\D_k$ have {\em least index} $k$.
One aim of this paper is to establish exactly which fields possess
an orthomorphism of least index $k$. In doing so we will answer 
this problem stated by Evans \cite[Problem~28]{Eva92}:

\begin{problem}\label{p:1}
  If $a\mid b$ and $b\mid(q-1)$ we know that
  $\C_a(q)\subseteq\C_b(q)$.  When do we have
  equality?  When do we have inequality?
\end{problem}

Two orthomorphisms, $\theta$ and $\theta'$ over $\F$ are
\emph{orthogonal} if $\theta-\theta'$ is a permutation of $\F$.
Orthogonal orthomorphisms are particularly useful for constructing
orthogonal Latin squares \cite{Eva92}.  Moreover, large sets of
orthogonal Latin squares can be constructed from orthogonal cyclotomic
orthomorphisms~\cite{Eva92a}.  Our second main result shows that for
any desired finite sequence of indices, if the field is large enough
it will contain a set of pairwise orthogonal cyclotomic orthomorphisms
of the desired indices. This gives a partial answer to another
problem posed by Evans \cite[Problem~27]{Eva92}:

\begin{problem}\label{p:2}
  For $p$ an odd prime, what types of orthomorphisms can be orthogonal
  to a non-linear cyclotomic orthomorphism of index $e$, where $1<e<p-1${\rm?} 
\end{problem}

The structure of the paper is as follows. In \sref{s:basics} we define
the notation that we will use throughout, and show some basic
results. In \sref{s:exist} we answer \pref{p:1} by constructing
orthomorphisms with each plausible least index (except for some very
small fields where some indices are not achievable).  In
\sref{s:orthorth} we give a partial solution to \pref{p:2} by
showing that cyclotomic orthomorphisms of different indices can be
orthogonal provided the field is large enough.  For certain small
indices of cyclotomic orthomorphisms it is possible to compute all
cases not covered by the asymptotic results, and thereby state results
that hold for all orders.  Finally, in \sref{s:conclude} we state some
open problems and directions for future research.

\section{The basics}\label{s:basics}

We will be working in a finite field $\F=\F_q$ of order $q$.
Suppose $k\mid(q-1)$.  We use $\omega_k$ to denote a
complex primitive $k$-th root of unity, and $\eta_k$ to denote a
multiplicative character of order $k$ in $\F$. For $0\le i<k$, we
define $C_{k,i}=\eta_k^{-1}(\omega_k^i)$ to be the $i$-th cyclotomy class
in $\F$. In particular, $C_{k,0}$ is the unique subgroup of index $k$ in
$\F^*$, and $C_{k,i}$ is a coset of $C_{k,0}$ for each $i$.

A \emph{cyclotomic map} $\theta(x)$
of index $k$ is a map from $\F$ to $\F$ satisfying
\begin{equation}\label{e:cyclomap}
  \theta(x)=\left\{ 
    \begin{array}{ll} 
      0 & \mbox{if } x=0, \\ 
      a_i x & \mbox{if } x\in C_{k,i}, \\ 
    \end{array} \right.
\end{equation}
where $[a_0,a_1,\dots,a_{k-1}]$ is a list of field elements, called 
{\em multipliers}, that defines $\theta$.  We write 
$\theta=[a_0,\ldots,a_{k-1}]$.
The map defined in \eref{e:cyclomap} is
a \emph{cyclotomic orthomorphism of index $k$} if it is a bijection and
\begin{equation*}
  x\mapsto \theta(x)-x=\left\{ 
    \begin{array}{ll} 
      0 & \mbox{if } x=0, \\ 
      (a_i-1) x &  \mbox{if } x\in C_{k,i}, \\ 
    \end{array} \right.
\end{equation*}
is also a bijection. From \cite[p.~41]{Eva92} we have:

\begin{lemma}\label{l:characterise} 
  A necessary and sufficient condition for
  $\theta=[a_0,\ldots,a_{k-1}]$ to be an orthomorphism is that the
  maps $C_{k,i}\mapsto a_iC_{k,i}$ and $C_{k,i}\mapsto(a_i-1)C_{k,i}$
  both permute the cyclotomy classes. Moreover, orthomorphisms
  $\theta=[a_0,\ldots,a_{k-1}]$ and $\theta'=[a'_0,\ldots,a'_{k-1}]$
  are orthogonal if and only if $C_{k,i}\mapsto(a_i-a'_i)C_{k,i}$
  permutes the cyclotomy classes.
\end{lemma}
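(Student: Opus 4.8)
The plan is to isolate a single structural fact about cyclotomic maps and then apply it three times. The key observation is that for any nonzero $c\in\F$, multiplication by $c$ carries each cyclotomy class onto another cyclotomy class: since $C_{k,i}$ is a coset of the subgroup $C_{k,0}$ and $\F^*$ is abelian, $cC_{k,i}$ is again a coset of $C_{k,0}$, hence equals $C_{k,\sigma(i)}$ for some index $\sigma(i)$. Thus any cyclotomic map respects the partition of $\F^*$ into cyclotomy classes, and the question of bijectivity reduces to a question about how these classes are moved around.

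First I would prove the following auxiliary claim: a cyclotomic map $\phi$ given by $\phi(0)=0$ and $\phi(x)=c_ix$ on $C_{k,i}$ is a permutation of $\F$ if and only if every $c_i\neq0$ and the induced map $C_{k,i}\mapsto c_iC_{k,i}$ is a permutation of the set of $k$ cyclotomy classes. For the forward direction, if some $c_i=0$ then $\phi$ collapses the nonempty class $C_{k,i}$ to $0=\phi(0)$, contradicting injectivity; and once all $c_i\neq0$, each restriction $\phi|_{C_{k,i}}$ is a bijection onto the class $c_iC_{k,i}$, so injectivity of $\phi$ forces these $k$ image classes to be distinct, hence to realise a permutation of the classes (a counting argument, since there are exactly $k$ classes to hit). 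The converse is immediate: if the classes are permuted and each $c_i\neq0$, then $\phi$ restricts to a bijection on each class, the images tile $\F^*$, and $\phi(0)=0$, so $\phi$ is a bijection of $\F$.

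With the claim in hand, the lemma follows by applying it to three cyclotomic maps. Taking $c_i=a_i$ shows that $\theta$ is a bijection if and only if $C_{k,i}\mapsto a_iC_{k,i}$ permutes the classes. Because $\theta(x)-x$ is itself the cyclotomic map with multipliers $a_i-1$ (the identity having all multipliers equal to $1$), taking $c_i=a_i-1$ shows that $x\mapsto\theta(x)-x$ is a bijection if and only if $C_{k,i}\mapsto(a_i-1)C_{k,i}$ permutes the classes. Since $\theta$ is an orthomorphism precisely when both maps are bijections, the two conditions together give the first assertion. For the \emph{moreover} part, observe that $(\theta-\theta')(x)$ equals $0$ at $x=0$ and $(a_i-a'_i)x$ on $C_{k,i}$, so it is the cyclotomic map with multipliers $a_i-a'_i$; the claim with $c_i=a_i-a'_i$ then shows $\theta-\theta'$ is a permutation if and only if $C_{k,i}\mapsto(a_i-a'_i)C_{k,i}$ permutes the classes, which is exactly the definition of orthogonality.

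There is no serious obstacle here; the proof is essentially bookkeeping once the coset-preservation observation is made. The only points needing care are that the phrase ``permutes the cyclotomy classes'' must be read as implicitly requiring each multiplier to be nonzero (so that $c_iC_{k,i}$ is genuinely a class rather than $\{0\}$), and that the step upgrading ``the $k$ images are distinct classes'' to ``the index map $\sigma$ is a permutation'' relies on there being exactly $k$ classes available.
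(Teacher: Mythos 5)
Your proof is correct. Note that the paper does not actually prove this lemma at all: it is quoted directly from Evans \cite[p.~41]{Eva92}, so there is no in-paper argument to compare against. Your write-up is the natural, complete argument: the key coset observation (for $c\neq0$, $cC_{k,i}$ is again a coset of $C_{k,0}$, so a cyclotomic map respects the class partition), the auxiliary claim that such a map is a bijection of $\F$ iff all multipliers are nonzero and the induced map on classes is injective (hence a permutation, by finiteness and the pigeonhole/counting step you flag), and then three applications with multipliers $a_i$, $a_i-1$, and $a_i-a'_i$. Your two cautionary remarks are exactly the right ones: reading ``permutes the cyclotomy classes'' as forcing each multiplier to be nonzero, and the upgrade from ``distinct image classes'' to ``permutation of indices.'' Nothing is missing.
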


One way to guarantee that a map $C_{k,i}\mapsto\lambda_iC_{k,i}$
permutes the cyclotomy classes is to choose all the $\lambda_i$ from
the same cyclotomy class, say $C_{k,\ell}$. In this case $C_{k,i}$ is
mapped to $C_{k,(i+\ell)\mod k}$ for each $i$, which necessarily
produces a permutation. This observation will be particularly useful 
to us when applying \lref{l:characterise} in \sref{s:orthorth}.

\medskip

All of the above notation depends on the field we are operating in,
which will usually be implicitly understood.  However, if the context
is such that we are working with more than one field at a time, we
adopt the notational convention of specifying the order of the
relevant field in the superscript, for example, $\eta_k^q$,
$C_{k,i}^q$ or $\theta=[a_0,\ldots,a_{k-1}]^q$. Strictly speaking,
the notation $\theta=[a_0,\ldots,a_{k-1}]$ also conceals a dependence
on the choice of the character $\eta_k$, since choosing a different
character might reorder the cyclotomy classes. However, this is not an
important dependence for us, since it is just an issue of relabelling
and we will never change our choice of $\eta_k$. The sets we are most
interested in, $\C_k$ and $\D_k$, do not depend on the choice of
$\eta_k$.

Next we look at the case when the list of multipliers for a cyclotomic
orthomorphism is periodic.

\begin{lemma}\label{l:periodic}
  Let $a,b$ be integers such that $a \mid b$, and suppose
  $\theta=[c_0, c_1, \ldots, c_{b-1}]\in \C_b$.  Then $\theta \in \C_a$
  if and only if $c_i=c_{i+a}$ for $0 \leq i < b-a$.
\end{lemma}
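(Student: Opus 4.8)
The plan is to reduce the statement to the elementary fact that, when $a\mid b$, each cyclotomy class of index $a$ splits as a disjoint union of cyclotomy classes of index $b$, with the index-$b$ labels forming an arithmetic progression of common difference $a$. Writing $d=b/a$, I would first fix a generator $g$ of $\F^*$ and, using the freedom in the choice of character noted in the text, take $\eta_k(g)=\omega_k$ for all relevant $k$, chosen compatibly so that $\omega_a=\omega_b^{\,d}$. With this normalisation $C_{k,i}=\{g^j:j\equiv i\pmod k\}$, and since $a\mid b$ a short computation gives
\[
C_{a,m}=\bigcup_{t=0}^{d-1}C_{b,\,m+ta}\qquad(0\le m<a),
\]
the union being disjoint and every label $m+ta$ lying in $\{0,\dots,b-1\}$. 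This set decomposition is the only geometric input, and establishing it with the labels pinned down exactly is the step that needs the most care; it is bookkeeping around the divisibility $a\mid b$ and the choice of characters rather than a genuine difficulty.

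Given the decomposition, both implications are short. I would first note that since $\theta\in\C_b$ it is already an orthomorphism, so by the definition of $\C_a$ it remains only to decide when $\theta$ can be written in index-$a$ form $[a_0,\dots,a_{a-1}]$, equivalently when $\theta(x)/x$ is constant on each $C_{a,m}$. For the forward direction, suppose $\theta\in\C_a$, say $\theta(x)=a_m x$ on $C_{a,m}$. For each $t$ choose any $x\in C_{b,\,m+ta}\subseteq C_{a,m}$; the index-$b$ description gives $\theta(x)=c_{m+ta}x$ while the index-$a$ description gives $\theta(x)=a_m x$, so cancelling $x\ne0$ yields $c_{m+ta}=a_m$ for every $t$. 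Hence $c_m=c_{m+a}=\dots=c_{m+(d-1)a}$ for each $m$, which is precisely the assertion $c_i=c_{i+a}$ for $0\le i<b-a$.

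For the converse, I would assume $c_i=c_{i+a}$ for $0\le i<b-a$, so that for each fixed $m$ the values $c_m,c_{m+a},\dots,c_{m+(d-1)a}$ coincide with a common value $a_m$. By the decomposition above, any $x\in C_{a,m}$ lies in some $C_{b,\,m+ta}$, whence $\theta(x)=c_{m+ta}x=a_m x$; thus $\theta(x)/x$ is constant on $C_{a,m}$ and $\theta=[a_0,\dots,a_{a-1}]$ is cyclotomic of index $a$. As $\theta$ is already an orthomorphism, it follows that $\theta\in\C_a$, completing the argument. Everything beyond the indexing identity of the first paragraph is immediate, so that identity is where I expect to spend the effort.
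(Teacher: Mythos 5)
Your proof is correct and takes essentially the same route as the paper: both arguments rest on the decomposition $C_{a,m}=\bigcup_{t=0}^{b/a-1}C_{b,\,m+ta}$, which the paper derives by setting $\eta_a=(\eta_b)^{b/a}$ and computing $\eta_a$ on each index-$b$ class, while you derive it via discrete logarithms with respect to a generator of $\F^*$ under the compatible normalisation $\omega_a=\omega_b^{b/a}$ --- the same choice in different clothing. The paper then dismisses the remainder with ``this result now follows from the definition of cyclotomic orthomorphisms''; your last two paragraphs simply spell out that step in both directions.
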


\begin{proof}
Suppose we have a character $\eta_b$ of order $b$. Then $\eta_a=(\eta_b)^{b/a}$
is a character of order $a$. Moreover, if $x\in\eta_b^{-1}(\omega_b^{i+\lambda a})$
for some integer $\lambda$ then
\[
\eta_a(x)=(\eta_b(x))^{b/a}=(\omega_b^{i+\lambda a})^{b/a}=\omega_b^{ib/a+\lambda b}
=\omega_b^{ib/a},
\]
which is independent of $\lambda$. Hence 
$C_{b,c}\cup C_{b,c+a}\cup C_{b,c+2a}\cup\cdots\cup C_{b,c+(b/a-1)a}$
is a cyclotomy class of index $a$, for each $0\le c<a$.
This result now follows from the definition of cyclotomic orthomorphisms.
\end{proof}

Orthomorphisms in $\C_1$ and $\C_2$ are called {\em linear} and {\em
  quadratic}, respectively \cite{Eva92}.  We define a cyclotomic
orthomorphism $\theta=[a_0,\dots,a_{k-1}]$ of index $k$ to be {\em
  near-linear} if $a_0\neq a_1=a_2=\cdots=a_{k-1}$. For example, all
non-linear quadratic orthomorphisms are near-linear.  The name arises
because, by \lref{l:periodic}, if the first multiplier of a
near-linear orthomorphism was changed to agree with the later
multipliers, the orthomorphism would become linear.  Another immediate
consequence of \lref{l:periodic} is this:

\begin{lemma}\label{l:nearlinDk}
  If $\theta$ is a near-linear orthomorphism of index $k$, then
  $\theta\in\D_k$.
\end{lemma}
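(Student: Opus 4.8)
The plan is to show that a near-linear $\theta=[a_0,\dots,a_{k-1}]$ (so $a_0\neq a_1=\cdots=a_{k-1}$) lies in $\C_k$ but in no $\C_\ell$ with $\ell<k$, which is precisely the statement $\theta\in\D_k$. Since $\theta$ is given as an orthomorphism of index $k$ we already have $\theta\in\C_k$, so the entire content is to rule out every smaller index.

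First I would reduce the problem to proper divisors of $k$. The obstacle here is that \lref{l:periodic} only speaks about a pair $a\mid b$, whereas a priori $\theta$ could be cyclotomic of some index $\ell<k$ with $\ell\nmid k$. To handle this, suppose for contradiction that $\theta\in\C_\ell$ for some $\ell<k$ (necessarily $\ell\mid(q-1)$), and set $d=\gcd(\ell,k)$. The multiplier function $x\mapsto\theta(x)/x$ is constant on the cosets of $C_{\ell,0}$ and also on the cosets of $C_{k,0}$; hence it is invariant under multiplication by every element of the subgroup generated by $C_{\ell,0}$ and $C_{k,0}$. In the cyclic group $\F^*$ this subgroup is $C_{\ell,0}C_{k,0}=C_{d,0}$, the unique subgroup of order $\lcm\big((q-1)/\ell,(q-1)/k\big)=(q-1)/d$. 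Therefore $\theta\in\C_d$, and $d$ is a proper divisor of $k$ because $d\mid k$ and $d\le\ell<k$.

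It then remains to rule out $\theta\in\C_d$ for every proper divisor $d$ of $k$, and here \lref{l:periodic} applies directly. Since $d\mid k$ and $\theta\in\C_k$, \lref{l:periodic} would force $a_i=a_{i+d}$ for all $0\le i<k-d$; taking $i=0$ gives $a_0=a_d$. As $d$ is a proper divisor we have $1\le d\le k-1$, so $a_d$ is one of the later multipliers and equals $a_1$. This yields $a_0=a_1$, contradicting near-linearity. Hence no such $d$ exists, so $\theta\notin\C_\ell$ for every $\ell<k$, and $\theta\in\D_k$.

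I expect the genuinely substantive point to be the gcd reduction in the second paragraph: everything else is a one-line application of \lref{l:periodic} together with the defining inequality $a_0\neq a_1$. Once one knows that being cyclotomic of both indices $\ell$ and $k$ forces being cyclotomic of index $\gcd(\ell,k)$ --- equivalently, that the set of indices of $\theta$ is closed downward under gcd, so that a well-defined least index exists and divides $k$ --- the statement is indeed immediate from \lref{l:periodic}.
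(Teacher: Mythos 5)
Your proof is correct, and it is essentially the paper's argument: the paper offers no written proof, stating the lemma as ``another immediate consequence'' of \lref{l:periodic}, and the core step is exactly yours --- periodicity with period a proper divisor $d$ of $k$ would force $a_0=a_d=a_1$, contradicting near-linearity. The one genuine addition in your write-up is the gcd reduction (if $\theta\in\C_\ell\cap\C_k$ then $\theta\in\C_{\gcd(\ell,k)}$, because the multiplier map is invariant under the subgroup generated by $C_{\ell,0}$ and $C_{k,0}$, which is $C_{\gcd(\ell,k),0}$ in the cyclic group $\F^*$); this is needed because $\D_k$ is defined by removing $\C_\ell$ for \emph{all} $\ell<k$ dividing $q-1$, not merely divisors of $k$, and it is precisely the detail the paper suppresses in calling the lemma immediate.
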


This last result, plus the fact that they are easy to construct, is the
main reason for our interest in near-linear orthomorphisms. From
\lref{l:characterise} we have:

\begin{lemma}\label{l:nearlin}
  The cyclotomic map $\theta=[a_0,a_1,a_1,\dots,a_1]$ of index $k$ is
  a near-linear orthomorphism if and only if $a_0\ne a_1$,
  $\eta_k(a_0)=\eta_k(a_1)$ and $\eta_k(a_0-1)=\eta_k(a_1-1)$.  
\end{lemma}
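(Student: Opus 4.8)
The plan is to apply \lref{l:characterise} directly, so the whole statement reduces to understanding when a map of the form $C_{k,i}\mapsto\lambda_iC_{k,i}$ permutes the cyclotomy classes. The key preliminary observation is that if $\lambda_i\neq0$, say with $\eta_k(\lambda_i)=\omega_k^{m_i}$ so that $\lambda_i\in C_{k,m_i}$, then $\lambda_iC_{k,i}=C_{k,(i+m_i)\bmod k}$; whereas if $\lambda_i=0$ then $\lambda_iC_{k,i}=\{0\}$ is not a cyclotomy class and no permutation can result. So first I would record that a necessary condition is that every $\lambda_i$ be nonzero, and that in that case the induced action on class indices is simply the shift $i\mapsto(i+m_i)\bmod k$, where $\omega_k^{m_i}=\eta_k(\lambda_i)$.

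Next I would specialise to the near-linear shape $\theta=[a_0,a_1,a_1,\dots,a_1]$ and analyse the first map $C_{k,i}\mapsto a_iC_{k,i}$ of \lref{l:characterise}. Writing $\eta_k(a_1)=\omega_k^t$ (which already forces $a_1\neq0$, since otherwise the classes $C_{k,1},\dots,C_{k,k-1}$ are not sent to cyclotomy classes), the classes $C_{k,1},\dots,C_{k,k-1}$ are sent to the indices $(1+t),(2+t),\dots,(k-1+t)\bmod k$. These are $k-1$ distinct residues modulo $k$ that omit exactly the residue $t$. Hence the full map is a permutation precisely when $C_{k,0}$ is sent to index $t$, that is, when $\eta_k(a_0)=\omega_k^t=\eta_k(a_1)$ (which in turn forces $a_0\neq0$).

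I would then run the identical argument for the second map $C_{k,i}\mapsto(a_i-1)C_{k,i}$, replacing $a_i$ by $a_i-1$ throughout. This shows that this second map permutes the cyclotomy classes if and only if $\eta_k(a_0-1)=\eta_k(a_1-1)$, and this condition likewise encodes the nondegeneracy $a_0\neq1$ and $a_1\neq1$. Combining the two analyses, \lref{l:characterise} says $\theta$ is an orthomorphism exactly when both character equalities hold; and since $\theta$ has near-linear form $[a_0,a_1,\dots,a_1]$ if and only if $a_0\neq a_1$, the stated equivalence follows.

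The underlying computations are routine, so the only point demanding real care is the bookkeeping in the ``missing residue'' step: one must verify that the images of $C_{k,1},\dots,C_{k,k-1}$ are genuinely distinct and omit precisely one residue, so that the single remaining class $C_{k,0}$ is forced to cover it. I would also be careful to note that the hypotheses are phrased purely through character values, and to point out that $\eta_k(a_0)=\eta_k(a_1)$ and $\eta_k(a_0-1)=\eta_k(a_1-1)$ already build in the nondegeneracy requirements $a_0,a_1\notin\{0,1\}$ that are needed for the two maps even to send cyclotomy classes to cyclotomy classes.
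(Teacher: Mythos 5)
Your proposal is correct and takes essentially the same approach as the paper: the paper presents this lemma as an immediate consequence of \lref{l:characterise}, and your argument is precisely that derivation, with the routine ``multiplication by an element of $C_{k,m}$ shifts class indices by $m$'' bookkeeping (which the paper records just after \lref{l:characterise}) and the missing-residue step for necessity written out in full. Nothing in your write-up deviates from the paper's intended reasoning; it simply makes explicit what the paper treats as routine.
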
%% beware a0=a1 or either being 0,1

In other words, to construct a near-linear orthomorphism we simply
need to find two multipliers $a_0$ and $a_1$ from the same cyclotomy
class such that $a_0-1$ and $a_1-1$ belong to the same cyclotomy class
as each other. This is a simple task
for a computer to do. There is one restriction, though, on when we
might expect to find a near-linear orthomorphism.  There are no
near-linear orthomorphisms of index $q-1$ or $(q-1)/2$ over $\F_q$, for
any $q$. This is because
a near-linear orthomorphism differs from a linear orthomorphism on a
single cyclotomy class.  Cyclotomy classes of index $q$ or $(q-1)/2$
have size $1$ or $2$ respectively.  Two different orthomorphisms must
differ on at least $3$ elements of their domain (c.f.~\cite[Thm~5]{CW10}).
Another way to see that near-linear orthomorphisms 
cannot have index $q-1$ or $(q-1)/2$ is to apply the following result
by Niederreiter and Winterhof~\cite{NW05}.

\begin{theorem}\label{t:numnearlin}
The number of near-linear orthomorphisms over $\F_q$ of a given index $k$ is
$(q-1-k)(q-1-2k)/k^2$.
\end{theorem}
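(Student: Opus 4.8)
The plan is to count near-linear orthomorphisms directly from \lref{l:nearlin}. Such an orthomorphism of index $k$ has the form $\theta=[a_0,a_1,a_1,\dots,a_1]$, so it is determined by the ordered pair $(a_0,a_1)$; by \lref{l:nearlin} the pairs that arise are exactly those with $a_0,a_1\in\F\setminus\{0,1\}$ and $a_0\ne a_1$ subject to $\eta_k(a_0)=\eta_k(a_1)$ and $\eta_k(a_0-1)=\eta_k(a_1-1)$. Writing $G=C_{k,0}$ for the subgroup of $k$-th powers, of order $f=(q-1)/k$, the two character conditions say precisely that $r:=a_0/a_1\in G$ and $s:=(a_0-1)/(a_1-1)\in G$. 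Thus the theorem reduces to counting these admissible pairs and checking that the total equals $(q-1-k)(q-1-2k)/k^2=(f-1)(f-2)$.

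The main step is to set up a bijection between admissible pairs $(a_0,a_1)$ and pairs $(r,s)\in G\times G$ cut out by simple inequalities. In one direction I send $(a_0,a_1)\mapsto(r,s)$ with $r=a_0/a_1$ and $s=(a_0-1)/(a_1-1)$, both of which lie in $G$. In the other direction, from the relations $a_0=ra_1$ and $a_0-1=s(a_1-1)$ I would eliminate $a_0$ to obtain $(r-s)a_1=1-s$, which requires $r\ne s$ and yields $a_1=(1-s)/(r-s)$ and $a_0=ra_1$. A short computation then gives $a_1-1=(1-r)/(r-s)$ and $a_0-1=s(1-r)/(r-s)$, from which one checks that the two maps are mutually inverse.

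I expect the only delicate point to be matching the exclusions on $(a_0,a_1)$ with those on $(r,s)$. From the formulas above, and using $r,s\ne0$ since $r,s\in G$: the conditions $a_0,a_1\ne0$ reduce to $s\ne1$, the conditions $a_0,a_1\ne1$ reduce to $r\ne1$, and $a_0\ne a_1$ then follows automatically (as $a_0-a_1=(r-1)a_1$), while solvability of the defining equations forces $r\ne s$. Hence admissible pairs correspond exactly to $(r,s)\in G\times G$ with $r\ne1$, $s\ne1$ and $r\ne s$. Counting is then immediate: there are $(f-1)^2$ pairs with $r\ne1$ and $s\ne1$, of which $f-1$ have $r=s$, leaving $(f-1)^2-(f-1)=(f-1)(f-2)=(q-1-k)(q-1-2k)/k^2$. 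As a sanity check, this vanishes exactly when $f\in\{1,2\}$, that is for indices $q-1$ and $(q-1)/2$, in agreement with the earlier observation that no near-linear orthomorphism can have either of these indices.
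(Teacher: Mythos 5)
Your proof is correct, but there is nothing internal to compare it against: the paper does not prove \tref{t:numnearlin} at all, quoting it as a known result of Niederreiter and Winterhof \cite{NW05}. Your argument is therefore a self-contained derivation that the paper itself lacks. The reduction via \lref{l:nearlin} is sound (the conditions $\eta_k(a_0)=\eta_k(a_1)$ and $\eta_k(a_0-1)=\eta_k(a_1-1)$ do force $a_0,a_1\notin\{0,1\}$ once $a_0\ne a_1$), and the change of variables $r=a_0/a_1$, $s=(a_0-1)/(a_1-1)$ genuinely is a bijection onto $\{(r,s)\in C_{k,0}\times C_{k,0}: r\ne 1,\ s\ne 1,\ r\ne s\}$: the system $a_0=ra_1$, $a_0-1=s(a_1-1)$ is a linear system in $(a_0,a_1)$ with determinant $r-s$, so it is uniquely solvable exactly when $r\ne s$, and your matching of the exclusions is right (in the forward direction $r=s$ would force $s=1$, which is already excluded). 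The count $(f-1)^2-(f-1)=(f-1)(f-2)$ with $f=(q-1)/k$ then equals $(q-1-k)(q-1-2k)/k^2$, and your sanity check explains transparently why indices $q-1$ and $(q-1)/2$ admit no near-linear orthomorphisms, which the paper argues separately. Two small points to make explicit in a polished write-up: the argument (like the definition of near-linearity) presumes $k\ge2$, since the formula is false for $k=1$; and one should note why distinct pairs $(a_0,a_1)$ yield distinct orthomorphisms, which holds because the multipliers of a cyclotomic map of a given index are recoverable from the map as $\theta(x)/x$ on each nonempty cyclotomy class.
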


We will not explicitly need the following two results, but they seem worth
recording.

\begin{lemma}
  Let $\theta=[a_1, \ldots, a_n]^F$ be a cyclotomic orthomorphism over
  a field $F$.  Let $E$ be an extension field of $F$.  
  Then $\phi=[a_1, \ldots, a_n]^E$ is an orthomorphism over $E$.
\end{lemma}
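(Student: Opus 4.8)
The plan is to reduce everything to the combinatorial criterion of \lref{l:characterise} and then track what happens to cyclotomy classes under the inclusion $F\subseteq E$. Write $q=|F|$ and $m=[E:F]$, so $|E|=q^m$. Since $n\mid(q-1)$ and $(q-1)\mid(q^m-1)$, the classes $C_{n,i}^E$ are defined and the multipliers $a_1,\dots,a_n\in F\subseteq E$ still make sense, so $\phi$ is at least a well-defined cyclotomic map of index $n$ over $E$. By \lref{l:characterise}, $\phi$ is an orthomorphism precisely when the maps $C_{n,i}^E\mapsto a_iC_{n,i}^E$ and $C_{n,i}^E\mapsto(a_i-1)C_{n,i}^E$ both permute the index-$n$ cyclotomy classes of $E$; writing $\mathrm{cl}^E$ for the index-$n$ class index in $E$, this says that $i\mapsto i+\mathrm{cl}^E(a_i)$ and $i\mapsto i+\mathrm{cl}^E(a_i-1)$ must be permutations of $\Z/n\Z$. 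Since $\theta$ is an orthomorphism over $F$, the corresponding statements with $\mathrm{cl}^F$ in place of $\mathrm{cl}^E$ already hold.

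So the key step is to compare $\mathrm{cl}^F$ and $\mathrm{cl}^E$ on the elements $a_i$ and $a_i-1$. The natural move is to choose $\eta_n^E$ compatibly with $\eta_n^F$, i.e.\ to extend $\eta_n^F$ to a character of $E^*$ of order exactly $n$. If such a character exists then $\eta_n^E|_{F^*}=\eta_n^F$, hence $\mathrm{cl}^E(a)=\mathrm{cl}^F(a)$ for every $a\in F^*$ and in fact $C_{n,i}^F\subseteq C_{n,i}^E$, so $\phi$ restricts to $\theta$ on $F$. With this alignment the two permutation conditions over $E$ become literally the two known conditions over $F$, and $\phi$ is an orthomorphism.

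The main obstacle is exactly the existence of this compatible character, and here a genuine difficulty appears rather than a technicality. Fixing generators $g$ of $F^*$ and $G$ of $E^*$ with $g=G^d$, $d=(q^m-1)/(q-1)$, one computes $d\equiv m\pmod n$ (using $q\equiv1\pmod n$), so any order-$n$ character forces $\mathrm{cl}^E(a)\equiv m\,\mathrm{cl}^F(a)\pmod n$ (up to the relabelling freedom in $\eta_n^E$). Thus the compatibility holds iff $m$ is invertible modulo $n$, i.e.\ iff $\gcd\bigl([E:F],n\bigr)=1$, and in that regime the argument above goes through routinely. When $\gcd([E:F],n)>1$ the inclusion $F^*\hookrightarrow E^*$ collapses distinct index-$n$ classes of $F$ into one class of $E$ (precisely, $F^*\cap E^{*n}$ strictly contains $F^{*n}$, with index $\gcd([E:F],n)$), and multiplication by the $a_i$ can then send two classes to one. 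Indeed $\theta=[6,7,3,3]$ is an index-$4$ orthomorphism over $\F_{13}$ for which $\phi=[6,7,3,3]^{\F_{169}}$ is \emph{not} an orthomorphism: there $\mathrm{cl}^{\F_{169}}(6)=\mathrm{cl}^{\F_{169}}(7)=2$ while $\mathrm{cl}^{\F_{169}}(3)=0$, so both $C_{4,0}^{\F_{169}}$ and $C_{4,2}^{\F_{169}}$ map into $C_{4,2}^{\F_{169}}$. I would therefore expect the clean statement to require $\gcd([E:F],n)=1$; proving it in that case is straightforward once the compatible character is in hand, whereas without it the assertion fails.
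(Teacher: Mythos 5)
Your proposal is mathematically correct, and what it uncovers is a genuine error in the paper rather than a gap in your own reasoning. The paper's entire proof of this lemma is the single line ``Immediate from \lref{l:characterise}.'' That deduction tacitly assumes that an element of $F^*$ has the same index-$n$ cyclotomy class label in $E$ as it has in $F$. As you show, this is exactly what can fail: writing $q=|F|$ and $m=[E:F]$, the inclusion $F^*\hookrightarrow E^*$ acts on index-$n$ class labels as multiplication by $m$ modulo $n$ (up to the unit coming from the freedom in choosing $\eta_n^E$), since $(|E|-1)/(q-1)=1+q+\cdots+q^{m-1}\equiv m\pmod n$. Hence a labelling of the classes of $E$ compatible with that of $F$ exists if and only if $\gcd(m,n)=1$; when $\gcd(m,n)>1$, distinct classes of $F$ merge into a single class of $E$, and the permutation conditions of \lref{l:characterise} need not survive the extension.

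Your counterexample is valid; I checked it directly. Over $\F_{13}$ (primitive root $2$) the multipliers $6,7,3,3$ have index-$4$ labels $1,3,0,0$, giving the class map $0\mapsto1$, $1\mapsto0$, $2\mapsto2$, $3\mapsto3$, while $a_i-1=5,6,2,2$ all have label $1$, giving the cyclic shift; so $[6,7,3,3]$ is an index-$4$ orthomorphism over $\F_{13}$. Over $\F_{169}$, however, $3=2^4$ is a fourth power, whereas $6^{42}=7^{42}=-1$, so $3\in C^{169}_{4,0}$ while $6,7\in C^{169}_{4,2}$. Consequently $C^{169}_{4,0}\mapsto 6\,C^{169}_{4,0}=C^{169}_{4,2}$ and $C^{169}_{4,2}\mapsto 3\,C^{169}_{4,2}=C^{169}_{4,2}$ collide, and the only alternative choice of $\eta_4^{169}$ (its cube) fixes the labels $0$ and $2$, so the collision is unavoidable: $[6,7,3,3]^{\F_{169}}$ is not even a bijection, and the lemma as stated is false. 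Your proposed repair, adding the hypothesis $\gcd([E:F],n)=1$ and aligning the characters, is the correct statement, and your argument for it is complete. The damage to the paper is contained: the authors note that this lemma and the one following it are not used elsewhere, and the following lemma is proved independently (there all multipliers lie in $C_{k,0}^E$, so no label-tracking is needed); note also that near-linear lists and indices $n\le2$ always do extend, which is consistent with every construction the paper actually relies on.
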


\begin{proof}
  Immediate from \lref{l:characterise}.
\end{proof}

\begin{lemma}
  Let $E$ be an extension field of a field $F$, where $|F| \geq 4$.
  Let $a_1,a_2$ be distinct elements of $F\setminus\{0,1\}$.
  Define $k=(|E|-1)/(|F|-1)$.  
  Then $\theta=[a_1, a_2,\ldots, a_2]^E$ 
  (where $a_2$ occurs $k-1$ times)
  is an orthomorphism over $E$
  of least index $k$.
\end{lemma}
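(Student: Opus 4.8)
The plan is to reduce everything to the near-linear machinery already developed, the key observation being that the choice of $k$ forces $F^*$ to coincide with the index-$k$ subgroup $C_{k,0}$ of $E^*$.

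First I would identify $F^*$ with $C_{k,0}$. Since $F$ is a subfield of $E$, the multiplicative group $F^*$ is a subgroup of $E^*$, and because $|E^*|=|E|-1$ and $|F^*|=|F|-1$ its index is exactly $[E^*:F^*]=(|E|-1)/(|F|-1)=k$. As $E^*$ is cyclic it has a unique subgroup of index $k$, namely $C_{k,0}$, so $F^*=C_{k,0}$. In particular, by the convention $C_{k,0}=\eta_k^{-1}(\omega_k^0)$, the character $\eta_k$ takes the value $1$ on every nonzero element of $F$.

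Next I would verify the hypotheses of \lref{l:nearlin}, matching $\theta=[a_1,a_2,\ldots,a_2]^E$ to the near-linear template $[a_0,a_1,\ldots,a_1]$ by relabelling its first multiplier as $a_1$ and its repeated multiplier as $a_2$. Because $a_1,a_2\in F\setminus\{0,1\}$, all four of $a_1$, $a_2$, $a_1-1$, $a_2-1$ are nonzero elements of $F$, hence lie in $F^*=C_{k,0}$. Therefore $\eta_k(a_1)=\eta_k(a_2)=1$ and $\eta_k(a_1-1)=\eta_k(a_2-1)=1$, while $a_1\ne a_2$ by assumption. All three conditions of \lref{l:nearlin} are met, so $\theta$ is a near-linear orthomorphism of index $k$.

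Finally, \lref{l:nearlinDk} immediately yields $\theta\in\D_k$, that is, $\theta$ has least index $k$. The hypothesis $|F|\ge4$ enters only to guarantee that $F\setminus\{0,1\}$ contains two distinct elements $a_1,a_2$, so that the construction is non-vacuous. The entire content of the argument is the opening identification $F^*=C_{k,0}$; once that is in hand the remaining verifications are routine character evaluations, so I do not anticipate any genuine obstacle.
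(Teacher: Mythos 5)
Your proof is correct and follows essentially the same route as the paper: both rest on the identification $F^*=C_{k,0}$ (which you justify in more detail via the uniqueness of the index-$k$ subgroup of the cyclic group $E^*$), followed by checking that $a_1,a_2,a_1-1,a_2-1$ all lie in that class and invoking \lref{l:nearlinDk} for the least-index claim. The only cosmetic difference is that you route the orthomorphism check through \lref{l:nearlin} while the paper cites \lref{l:characterise} directly, but the former is just the near-linear specialization of the latter.
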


\begin{proof}
The cyclotomy class $C_{k,0}$ in $E$ is
  the non-zero elements of the subfield $F$.  Since $a_1,a_2,a_1-1$
  and $a_2-1$ are all in $C_{k,0}$, \lref{l:characterise} tells us
  that $\theta$ is an orthomorphism.  Also, $\theta$ has least index
  $k$, by \lref{l:nearlinDk}.
\end{proof}

\section{Existence of cyclotomic orthomorphisms}\label{s:exist}

The aim of this section is to establish exactly which fields have 
cyclotomic orthomorphisms of a given least index. In doing so we
will completely answer \pref{p:1}.
It is easily checked that $\C_1(3)=\C_2(3)$,
$\C_1(4)=\C_3(4)$, $\C_1(5)=\C_2(5)=\C_4(5)$ and $\C_1(7)=\C_3(7)$.
We will show:

\begin{theorem}\label{t:Ans28}
Let $q$ be any prime power.
Except for the cases just listed, $\C_a(q)\neq\C_b(q)$ whenever $a,b$
are distinct integers satisfying
$a\mid b$ and $b\mid (q-1)$.
\end{theorem}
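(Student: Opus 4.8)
The plan is to reduce \tref{t:Ans28} to the single assertion that, apart from the listed small fields, $\D_b(q)\neq\emptyset$ for every divisor $b>1$ of $q-1$. This suffices: given distinct $a,b$ with $a\mid b$ and $b\mid(q-1)$, any $\theta\in\D_b(q)$ lies in $\C_b$ but, by \eref{e:Dk}, in no $\C_\ell$ with $\ell<b$; in particular $\theta\notin\C_a$, so $\C_a(q)\neq\C_b(q)$. Thus the whole family of inequalities follows from the nonemptiness of the sets $\D_b$.

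For the bulk of the indices I would invoke near-linear orthomorphisms. By \tref{t:numnearlin} the number of near-linear orthomorphisms of index $b$ is $(q-1-b)(q-1-2b)/b^2$, which is positive precisely when $b<(q-1)/2$ (the factor $q-1-b$ being positive for every proper divisor). For such $b$ with $2\le b<(q-1)/2$, \lref{l:nearlinDk} places each of these orthomorphisms in $\D_b$, so $\D_b\neq\emptyset$ at once. Since the largest proper divisor of $q-1$ is $(q-1)/2$ when $q$ is odd and at most $(q-1)/3$ when $q$ is even, the only indices not settled this way are $b=q-1$ (for every $q$) and $b=(q-1)/2$ (for odd $q$).

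These two top indices are where the work lies, and I expect them to be the main obstacle, since \tref{t:numnearlin} supplies no near-linear orthomorphism there. The structural observation I would use is that $\C_{q-1}$ is exactly the set of all orthomorphisms fixing $0$, while $\bigcup_{\ell<q-1}\C_\ell=\bigcup_{p\mid(q-1)}\C_{(q-1)/p}$, and $\C_{(q-1)/p}$ consists of those orthomorphisms commuting with multiplication by the order-$p$ subgroup $H_p$ of $\F^*$ (for $p=2$ these are precisely the odd orthomorphisms, $\theta(-x)=-\theta(x)$). A permutation of $\F^*$ commuting with $H_p$ is determined by its action on coset representatives, so there are at most $((q-1)/p)!\,p^{(q-1)/p}$ of them. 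To show $\D_{q-1}\neq\emptyset$ I would bound the total number of orthomorphisms of $\F$ from below (for instance via the permanent of the associated $0/1$ matrix) and compare: for large $q$ this count dominates $\sum_{p\mid(q-1)}((q-1)/p)!\,p^{(q-1)/p}$, forcing an orthomorphism outside every $\C_{(q-1)/p}$, i.e.\ one of least index $q-1$. The case $b=(q-1)/2$ is analogous, now comparing the number of odd orthomorphisms against those odd ones carrying an extra $H_p$-symmetry for some odd prime $p\mid(q-1)/2$. A useful simplification is that, applying \lref{l:nearlinDk} to a near-linear orthomorphism of index $p^{v}$ (where $p^{v}$ exactly divides $q-1$), one already witnesses $\C_{(q-1)/p}\subsetneq\C_{q-1}$ whenever $p^{v}<(q-1)/2$, since such an orthomorphism has least index $p^v\nmid(q-1)/p$; this confines the genuinely hard instances to the sparse family of $q$ for which the relevant top index is essentially a prime power, where the counting argument above is the tool of choice.

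Finally, the threshold produced by the counting argument is effective, so I would close the gap with a finite computation: for the finitely many $q$ below the threshold I would test each candidate least index directly via \lref{l:characterise} and \lref{l:periodic}, and verify that the only collapses $\C_a(q)=\C_b(q)$ are the four listed for $q\in\{3,4,5,7\}$. The delicate point throughout is matching the asymptotic estimate to the computer search so that no intermediate $q$ slips through; obtaining a clean, rigorous lower bound on the number of orthomorphisms (and of odd orthomorphisms), with an explicit enough constant, is the crux of the argument.
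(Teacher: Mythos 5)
Your opening reduction (nonemptiness of $\D_b$ implies $\C_a\neq\C_b$ for every $a\mid b$), and your use of \tref{t:numnearlin} together with \lref{l:nearlinDk} to settle every index $2\le b<(q-1)/2$, coincide exactly with the paper's first step, and your isolation of $b=q-1$ and $b=(q-1)/2$ as the only remaining cases is correct. Your further observation -- that a near-linear orthomorphism of index $p^{v}$ (with $p^{v}$ exactly dividing $q-1$ and $p^{v}<(q-1)/2$) already separates $\C_{(q-1)/p}$ from $\C_{q-1}$, confining the hard instances to $q$ for which the top index is essentially a prime power -- is valid and is a genuine simplification the paper does not use (the paper instead proves the stronger statement that $\D_{(q-1)/2}$ and $\D_{q-1}$ are themselves nonempty, via Theorems \ref{t:q-1half} and \ref{t:q-1}). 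The gap lies entirely in how you propose to finish those top indices.

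The counting argument you sketch needs a lower bound on the number of orthomorphisms of $\F_q$ (and, for the index-$(q-1)/2$ case, on the number of \emph{odd} orthomorphisms) that dominates $((q-1)/2)!\,2^{(q-1)/2}$, i.e.\ a bound of shape $q^{(1/2+\varepsilon)q}$. No such bound is available, and this is not a technicality: the paper itself remarks that the known lower bounds on orthomorphism counts apply only to fields of \emph{prime} order, and even those (from \cite{CW10}) are merely exponential, roughly $3.246^{q}$, which is super-exponentially smaller than the threshold your comparison requires; the sharp asymptotic $q!^2/q^{q}$ of \cite{EMM19} holds only for cyclic groups (so prime $q$), is not effective, and appeared as this paper went to press. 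Your parenthetical hope of getting the bound ``via the permanent of the associated $0/1$ matrix'' does not work either: counting orthomorphisms is a transversal-counting (three-dimensional permanent) problem, for which no van der Waerden--type lower bound holds -- general Latin squares can have no transversals at all -- and for odd orthomorphisms nothing is known even over prime fields. Consequently there is also no explicit threshold $q_0$, so the concluding ``finite computation'' has no finite range to run over. The paper circumvents all of this constructively: for $b=(q-1)/2$ it exhibits explicit members of $\D_{(q-1)/2}$ via \lref{l:evanscond} and Examples 1--4 (keyed to whether $-3$, $-1$, $-2$ or $2$ is a square in $\F_q$), and for $b=q-1$ it uses Evans's characteristic-2 construction \eref{e:aHa} together with, in odd characteristic, the translate $T_1[\theta]$ of a quadratic orthomorphism, shown to be non-cyclotomic because its permutation polynomial acquires a nonzero coefficient on $x^{(q-1)/2}$, a monomial incompatible with the form \eref{eq:pp} for every $k<q-1$. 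To repair your proof you would need to replace the counting step with explicit witnesses of this kind (or prove the missing enumeration results, which remain open problems).
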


Note that to prove $\C_a\neq\C_b$ it suffices to show that $\D_b$ is
non-empty. \tref{t:numnearlin}, together with \lref{l:nearlinDk}, does
this for all $b<(q-1)/2$. This fact was noted in \cite{NW05}, leaving 
open the cases $b=(q-1)/2$ and $b=q-1$.  These
two cases will be handled respectively
by Theorems \ref{t:q-1half} and \ref{t:q-1} below.
\tref{t:Ans28} will then follow immediately.

If $q$ is an odd prime power then the cyclotomy classes $C_{(q-1)/2,i}$
each have the form $\{\pm x\}$ for some $x\in\F_q$. We next give a 
method for building cyclotomic orthomorphisms of index $(q-1)/2$.

\begin{lemma}\label{l:evanscond}
Suppose there exists an integer $h$, permutations $\rho,\tau\in\sym_h$,
a vector $\sigma\in\{\pm1\}^h$ and 
nonzero field elements $r,m_i,v_i\in\F_q^*$ 
such that
\begin{align}
\sigma(i)m_iv_i&=rv_{\rho(i)}\label{e:phi}\\
(m_i-1)v_i&=(r-1)v_{\tau(i)}\label{e:phi-1}
\end{align}
for $1\le i\le h$. Suppose also that
\begin{align}\label{e:nonz}
0&\ne (m_1-r)\prod_{i=2}^h(m_1-m_i)\prod_{1\le i<j\le h}(v_i^2-v_j^2).
\end{align}
Then there exists an orthomorphism $\phi\in\D_{(q-1)/2}$.
\end{lemma}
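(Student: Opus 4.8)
The plan is to construct $\phi$ explicitly as a cyclotomic map of index $(q-1)/2$ and then verify the required properties through \lref{l:characterise} and \lref{l:periodic}. The cyclotomy classes of index $(q-1)/2$ are exactly the pairs $[x]:=\{x,-x\}$ with $x\in\F_q^*$, so such a map is determined by one multiplier per class. The factor $\prod_{1\le i<j\le h}(v_i^2-v_j^2)$ in \eref{e:nonz} guarantees that $[v_1],\dots,[v_h]$ are $h$ \emph{distinct} classes. I therefore define $\phi$ by assigning the multiplier $m_i$ to the class $[v_i]$ for each $i$, and the multiplier $r$ to every remaining class. (Here $r\ne1$: if $r=1$ then \eref{e:phi-1} would force $m_i=1$ for all $i$, contradicting $m_1\ne r$, which is part of \eref{e:nonz}.)

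To see that $\phi$ is an orthomorphism I would check the two conditions of \lref{l:characterise}. Let $\pi$ and $\pi'$ be the permutations of the cyclotomy classes induced by multiplication by $r$ and by $r-1$; both are genuine permutations because $r,r-1\in\F_q^*$, as in the remark following \lref{l:characterise}. The map $C\mapsto a_CC$ coincides with $\pi$ on every non-special class, while on a special class \eref{e:phi} gives
\[
[v_i]\longmapsto[m_iv_i]=[rv_{\rho(i)}]=\pi\big([v_{\rho(i)}]\big),
\]
the sign $\sigma(i)$ being irrelevant once we pass to classes. Since $\rho\in\sym_h$ permutes $[v_1],\dots,[v_h]$, the special classes are sent bijectively onto $\{\pi([v_1]),\dots,\pi([v_h])\}$ --- exactly the images that $\pi$ assigns to them. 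As $C\mapsto a_CC$ agrees with the bijection $\pi$ on all other classes, it is itself a bijection. The same argument with \eref{e:phi-1}, $\pi'$ and $\tau$ shows that $C\mapsto(a_C-1)C$ is a bijection. Hence $\phi$ is an orthomorphism of index $(q-1)/2$.

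Finally I would confirm that $\phi\in\D_{(q-1)/2}$. The remaining factors $(m_1-r)\prod_{i=2}^h(m_1-m_i)$ of \eref{e:nonz} state precisely that $m_1$ differs from $r$ and from every other $m_i$, so the value $m_1$ occurs exactly once among the multipliers of $\phi$. If $\phi\notin\D_{(q-1)/2}$, then $\phi\in\C_\ell$ for some proper divisor $\ell$ of $(q-1)/2$, and by \lref{l:periodic} its multiplier list would be $\ell$-periodic; but then each multiplier --- in particular $m_1$ --- would appear at least $(q-1)/(2\ell)\ge2$ times. This contradiction gives $\phi\in\D_{(q-1)/2}$.

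I expect the bijectivity check of the second paragraph to be the crux. The key phenomenon is that perturbing the linear map $[r,\dots,r]$ on the $h$ special classes merely reshuffles the images of those classes among themselves --- via $\rho$ for the first map and via $\tau$ for the second --- without colliding with the images of the untouched classes; this is exactly why $\rho$ and $\tau$ are required to be permutations, and why the vector $\sigma$ plays no role beyond letting \eref{e:phi} hold as an honest equality of field elements.
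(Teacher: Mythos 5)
Your construction and verification are exactly the paper's: the same map $\phi$ (multiplier $m_i$ on $\{\pm v_i\}$, multiplier $r$ elsewhere), with \eref{e:nonz} making $\phi$ well defined because the classes $\{\pm v_1\},\dots,\{\pm v_h\}$ are distinct, and with \eref{e:phi}, \eref{e:phi-1} feeding into \lref{l:characterise}. The paper compresses the permutation check into a single sentence, so your expansion --- multiplication by $r$ and by $r-1$ induce permutations $\pi,\pi'$ of the classes, and the special classes are merely reshuffled among their own $\pi$- (resp.\ $\pi'$-) images via $\rho$ (resp.\ $\tau$) --- is a faithful and correct filling-in of what the paper leaves implicit, including the preliminary point that $r\ne1$ (whence $r-1$, and by \eref{e:phi-1} each $m_i-1$, is nonzero).

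The one step that does not follow as written is in your final paragraph. By the paper's definition \eref{e:Dk}, knowing $\phi\notin\D_{(q-1)/2}$ only gives $\phi\in\C_\ell$ for some divisor $\ell$ of $q-1$ with $\ell<(q-1)/2$; such an $\ell$ need not be a proper divisor of $(q-1)/2$, which is what your periodicity argument requires. Concretely, for $q=13$ your argument rules out $\ell\in\{1,2,3\}$ but says nothing about $\ell=4$, which divides $q-1=12$ but not $(q-1)/2=6$. The gap closes cheaply using the same lemma at the top index: since $\phi\in\C_{(q-1)/2}\subseteq\C_{q-1}$, apply \lref{l:periodic} with $b=q-1$ and $a=\ell$. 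In the index-$(q-1)$ multiplier list of $\phi$ the value $m_1$ occurs exactly twice (on the singleton classes $\{v_1\}$ and $\{-v_1\}$), whereas an $\ell$-periodic list of length $q-1$ is $(q-1)/\ell\ge3$ repetitions of its initial block, so every value that occurs at all occurs a multiple of $(q-1)/\ell$ times --- a contradiction. This uniform argument handles all $\ell<(q-1)/2$ at once and can simply replace your last paragraph. (In fairness, the paper's own proof is equally terse here, citing only \lref{l:periodic}; but your explicit reduction to proper divisors of $(q-1)/2$ is an assertion that needs the justification above.)
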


\begin{proof}
We first note that $m_1\ne r$ by \eref{e:nonz}, which means that $r\ne 1$
and $m_i\ne 1$ for $1\le i\le h$, by \eref{e:phi-1}.
Define a map $\phi:\F_q\mapsto\F_q$ by
\[
\phi(x)=
\begin{cases}
m_ix&\text{if $x=\pm v_i$ for some $1\le i\le h$,}\\
rx&\text{otherwise.}\\
\end{cases}
\]
Condition \eref{e:nonz} ensures that $\phi$ is a well-defined
cyclotomic map of index $(q-1)/2$. Condition \eref{e:phi} and
\eref{e:phi-1} ensure respectively that the maps $x\mapsto\phi(x)$ and
$x\mapsto\phi(x)-x$ are permutations of $\F_q$. Hence $\phi\in\C_{(q-1)/2}$.
Finally, \eref{e:nonz} ensures that $m_1$ is distinct from
$m_2,\dots,m_h$ and $r$, which implies that $\phi\in\D_{(q-1)/2}$ 
by \tref{l:periodic}.
\end{proof}

It is routine to check that each of the following examples satisfy the
hypotheses of \lref{l:evanscond}.

\bigskip\noindent
{\bf Example 1:} 
Let $q>10$ be an odd prime power.
Suppose $\F_q$ is a field containing an element $\xi$
such that $\xi^2=-3$. Let $h=3$,
$\rho=[2,3,1]$, $\tau=[3,1,2]$, $\sigma=[1,1,1]$, $r=(1+\xi)/2$ and
\begin{align*}
v_1 &= (v_3-v_3\xi+1+\xi)/2, && m_1 = \frac{1+\xi}{v_3-v_3\xi+1+\xi}, \\
v_2 &= 1, && m_2 = (1+\xi)v_3/2, \\
    &     && m_3 = \frac{2v_3-1+\xi}{2v_3}, 
\end{align*}
where $v_3$ is any field element that is not a root of the polynomial
\begin{align*}
x(x^2-1)
&
(2x+\xi+1)
(2x+\xi-1)
(\xi x+x-2)
(\xi x-x-2)
\\
&
\times(\xi x-x-1-\xi)
(\xi x-x-3-\xi)
(\xi x-3x-1-\xi).
\end{align*}
There is a choice for $v_3$ available, given that $q>10$.

\bigskip\noindent
{\bf Example 2:} 
Let $q>12$ be an odd prime power.
Suppose $\F_q$ is a field containing an element $\xi$
such that $\xi^2=-1$. Let $h=4$,
$\rho=[2,3,4,1]$, $\tau=[4,1,2,3]$, $\sigma=[1,1,1,1]$, $r=(\xi+1)/2$,
\begin{align*} 
v_1 &= (v_3-1+\xi)/\xi,    &&m_1 = \frac{\xi-1}{2(v_3-1+\xi)}, \\
v_2 &= 1,                  &&m_2 = (1+\xi)v_3/2, \\
&                          &&m_3 = \frac{2v_3-1+\xi}{2v_3}, \\
v_4 &= (v_3+v_3\xi-1)/\xi,  &&m_4 = \frac{v_3+v_3\xi-2}{2(v_3+v_3\xi-1)}, 
\end{align*}
where $v_3$ is any field element that is not a root of the polynomial
\begin{align*}
x(x^2-1)
&
(x+\xi)
(x+\xi-1)
(2x+\xi-1)
(x+2\xi-1)\\
&
\times(x+\xi x-2)
(x+\xi x-1)
(x+\xi x-1+\xi)
(2x+\xi+\xi x-2)
(2\xi x+x-1).
\end{align*}
There is a choice for $v_3$ available, given that $q>12$.

\bigskip\noindent
{\bf Example 3:} 
Let $q$ be a power of a prime $p>3$.
Suppose $\F_q$ is a field containing an element $\xi$
such that $\xi^2=-2$ (in characteristic $11$, we must be
careful to choose $\xi=3$ since $\xi=-3$ results in $v_1=-v_2$). 
Let $h=4$, $\rho=[4, 3, 2,1]$, $\tau=[3,1,4,2]$, $\sigma=[-1,-1,-1,1]$,
$r = 1-\xi$, and
\begin{align*}
v_1 &= -2,     &&m_1 = \xi/2+1, \\
v_2 &= -\xi-1, &&m_2 = \frac{\xi+2}{\xi-2}, \\
v_3 &= 1,      &&m_3 = 3, \\
v_4 &= \xi,    &&m_4 = \xi+2.
\end{align*}

\bigskip\noindent
{\bf Example 4:} 
Suppose $\F_q$ is a field of odd characteristic other than
$3$, $7$ or $17$. Further suppose that $\F_q$ contains an element $\xi$
such that $\xi^2=2$ (if $\F_q$ has characteristic $23$,
we take $\xi=-5$, since $\xi=5$ results in $v_3=v_6$). Let $h=6$,
$\rho=[5, 4, 2, 1, 6, 3]$, 
$\tau=[2, 3, 1, 6, 4, 5]$, 
$\sigma=[1,-1,-1,1,1,-1]$,
$r = \xi$,  and
\begin{align*}
v_1 &= -1+\xi, &&m_1 = 2, \\
v_2 &= 1,      &&m_2 = 6-4\xi, \\
v_3 &= -3+\xi, &&m_3 = \frac{\xi}{3-\xi}, \\
v_4 &= 4-3\xi, &&m_4 = \frac{\xi-2}{3\xi-4}, \\
v_5 &= 2-2/\xi,&&m_5 = 2+2\xi, \\
v_6 &= 2,      &&m_6 = -1+3\xi/2.
\end{align*}

\begin{theorem}\label{t:q-1half}
There exists an orthomorphism $\phi\in\D_{(q-1)/2}$ for all odd
prime powers $q\notin\{5,7\}$.
\end{theorem}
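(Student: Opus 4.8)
The plan is to produce, for each admissible $q$, an explicit solution of the Evans conditions in \lref{l:evanscond} and then invoke that lemma. The near-linear orthomorphisms that settle all smaller indices are useless here, since the count in \tref{t:numnearlin} vanishes when $k=(q-1)/2$; thus Examples~1--4 are exactly the tools at hand. Each of those examples needs a field element $\xi$ with a prescribed square (one of $-3,-1,-2,2$), so the first task is to show that for every odd prime power $q$ at least one such square root exists in $\F_q$, with the accompanying size and characteristic restrictions satisfiable.

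The key observation is a quadratic-residue identity. Writing $\eta_2$ for the quadratic character of $\F_q$, multiplicativity gives $\eta_2(-1)\eta_2(2)\eta_2(-2)=\eta_2\big((-1)\cdot2\cdot(-2)\big)=\eta_2(4)=1$ for every odd $q$. Hence the nonzero elements $-1,2,-2$ cannot all be non-squares, so at least one is a square in $\F_q$. I would match each possibility to an example: if $-2$ is a square use Example~3 (valid in every characteristic $>3$); if $2$ is a square use Example~4 (valid in every odd characteristic other than $3,7,17$); and if only $-1$ is a square use Example~2 (valid once $q>12$). The characteristics excluded by Example~4 are handled directly: in characteristic $7$ we have $-3=4$, a square, so Example~1 applies for all $q=7^n\ge49$ while $q=7$ is a stated exception; in characteristic $17$ the element $-1$ is a square, so Example~2 applies for all $q=17^n\ge17>12$. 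For every characteristic $p\ge5$ this settles all $q>12$, and the only small fields left are $q=5,11$, of which $q=11$ has $-2$ a square and $q=5$ is excluded.

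The main obstacle is characteristic $3$, where three of the four examples degenerate: $-3=0$ kills Example~1, while the only available square values are $-2=1$ and $2=-1$, both of which force $\xi\in\F_3$ and thereby make multipliers such as $m_3=3=0$ in Example~3 illegal. Over $\F_{3^n}$ with $n$ even the element $-1$ is a square, so Example~2 disposes of all such $q\ge81$; but for $n$ odd, $-1$ is a non-square and none of Examples~1--4 survive. To cover these fields I would exhibit a further solution of \eref{e:phi}--\eref{e:nonz} tailored to characteristic $3$, with the multipliers and the $v_i$ taken directly in $\F_{3^n}$ rather than in a quadratic extension, and then check \eref{e:nonz} so that \lref{l:periodic} certifies least index $(q-1)/2$. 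Constructing and validating this characteristic-$3$ family is the crux; everything else reduces to the routine verification, already asserted for Examples~1--4, that the chosen data satisfy \eref{e:phi}, \eref{e:phi-1} and \eref{e:nonz}.

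Finally I would clear the finitely many small fields left uncovered by the thresholds $q>10$ and $q>12$, together with $q=9$ (where $-1$ is a square but $9<12$). For each such $q$ I would apply the criterion of \lref{l:characterise} to exhibit a single member of $\D_{(q-1)/2}$, confirming existence for all of them apart from the exceptions $q\in\{5,7\}$. The case $q=3$ is immediate, since there $(q-1)/2=1$ and the linear orthomorphism $[2]$ lies in $\D_1$.
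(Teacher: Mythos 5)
Your handling of characteristics $p\ge5$ is sound and matches the paper's: the identity $\eta_2(-1)\eta_2(2)\eta_2(-2)=\eta_2(4)=1$ gives the trichotomy, and Examples 2, 3, 4 (with Example 1 covering characteristic 7) dispose of all cases, leaving a finite sweep of small fields. But there is a genuine gap at exactly the point you call the crux: the fields $\F_{3^n}$ with $n$ odd, $n\ge3$, of which there are infinitely many, are covered only by a promissory note (``I would exhibit a further solution of \eref{e:phi}--\eref{e:nonz} tailored to characteristic $3$''). No such family is constructed or verified, so the proof is incomplete precisely where it is hardest; these fields cannot be absorbed into the finite computational sweep at the end.

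Moreover, the premise that forces you into this dead end is false. You assert that ``$-3=0$ kills Example 1,'' but Example 1 only requires an element $\xi$ with $\xi^2=-3$, and in characteristic $3$ the element $\xi=0$ satisfies this. The verification of \eref{e:phi}, \eref{e:phi-1} and \eref{e:nonz} for Example 1 uses only the relation $\xi^2=-3$, never $\xi\neq0$; for instance, the one instance of \eref{e:phi} that genuinely needs the relation is $m_3v_3=rv_1$, which reduces to $(\xi^2+3)(\,\cdot\,)=0$ and with $\xi=0$ becomes $6(v_3-1)=0$, true in characteristic $3$. With $\xi=0$ the data degenerate to $r=1/2$, $v_1=(v_3+1)/2$, $m_1=1/(v_3+1)$, $m_2=v_3/2$, $m_3=(2v_3-1)/(2v_3)$, and the excluded polynomial has root set $\{0,1,-1\}=\F_3$, so any $v_3\in\F_{3^n}\setminus\F_3$ works; such a $v_3$ exists for every $n\ge2$. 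This is exactly how the paper treats characteristic $3$, including $q=9$ (where it takes $v_3=1+\sqrt{-1}$ in $\F_9=\Z_3[\sqrt{-1}]$). Once you add this observation, your argument closes up and becomes essentially the paper's proof; without it, the theorem remains unproved for an infinite family of fields.
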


\begin{proof}
It is easy to check by exhaustion that $\D_{(q-1)/2}=\emptyset$ when
$q\in\{5,7\}$. For $q=3$ the map $x\mapsto2x$ is in $\D_{(q-1)/2}$.
For $q=9$ we use Example 1 with $\xi=0$ and $v_3=1+\sqrt{-1}$ in 
$\F_9=\Z_3[\sqrt{-1}]$. Henceforth we assume that $q\ge11$. 
For $\F_q$ of characteristic $3$ or $7$ we can use Example 1,
and for $q\equiv1\mod4$
we can use Example 2. So suppose that $q\equiv3\mod4$ where $\F_q$ has
characteristic at least $11$. Since $-1$ is
not a quadratic residue in $\F_q$,
we know that either $-2$ is a quadratic residue or
$2$ is a quadratic residue. In the former case we use Example 3 and in the
latter case we use Example 4.
\end{proof}

The only remaining case of \tref{t:Ans28} is when $b=q-1$. We say that
an orthomorphism over $\F_q$ that lies in $\D_{q-1}$ is {\em
  non-cyclotomic}, since it makes no use of the cyclotomic structure
of $\F_q$.  Also, following the terminology and notation of
Evans~\cite{Eva92}, we define a {\em translation} $T_g$ of an
orthomorphism $\theta$ to be the orthomorphism $T_g[\theta](x) =
\theta(x + g) - \theta(g)$.  In any field of prime order, translating
an orthomorphism in $\D_c$ where $1<c<q-1$ by a nonzero $g$ will
produce a non-cyclotomic orthomorphism.  However, the same is not
guaranteed in fields of composite order \cite[p.~48]{Eva92}.
Nevertheless, translation will still be a useful tool in our next
proof.

\begin{theorem}\label{t:q-1}
There are non-cyclotomic orthomorphisms over every
field $\F_q$ of order $q>5$.  
\end{theorem}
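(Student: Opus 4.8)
The plan is to produce, for every $q>5$, a single element of $\D_{q-1}$; since a non-cyclotomic orthomorphism is by definition a member of $\D_{q-1}$, this is all that the theorem requires. Throughout I will use the elementary reformulation of the definition: a cyclotomic orthomorphism $\psi$ (so $\psi(0)=0$) lies in $\C_\ell$ exactly when $\psi(\lambda x)=\lambda\psi(x)$ for every $x$ and every $\lambda\in C_{\ell,0}$. Consequently $\psi\in\D_{q-1}$ the moment the only $\lambda\in\F_q^*$ satisfying $\psi(\lambda x)=\lambda\psi(x)$ for all $x$ is $\lambda=1$, because for every proper divisor $\ell$ of $q-1$ the subgroup $C_{\ell,0}$ is nontrivial and would supply a forbidden $\lambda\ne1$. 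I would split on the parity of $q$, since the construction and the real difficulty differ in the two cases.

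For $q$ odd I would translate a near-linear orthomorphism of index $2$. By \tref{t:numnearlin} and \lref{l:nearlin} such a $\theta=[a_0,a_1]$ (with $a_0\ne a_1$) exists whenever $q>5$. Let $S=C_{2,0}$ be the group of nonzero squares and put $b=a_0-a_1\ne0$, so that $\theta$ is the linear map $x\mapsto a_1x$ except on $S$, where it equals $x\mapsto a_0x$. Fix any non-square $g$; then $\theta(g)=a_1g$, and the translate $\psi=T_g[\theta]$ equals $x\mapsto a_1x$ except when $x+g\in S$, in which case $\psi(x)=a_1x+b(x+g)$. If $\psi(\lambda x)=\lambda\psi(x)$ for all $x$, then choosing $x$ with $x+g\in S$ makes the nonlinear part $\lambda b(x+g)$ of the right-hand side nonzero, which forces $\lambda x+g\in S$ and $\lambda x+g=\lambda(x+g)$; hence $(\lambda-1)g=0$ and $\lambda=1$. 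Thus $\psi\in\D_{q-1}$. The point worth stressing is that selecting $g$ outside $S$ is exactly what rescues the translation idea in composite-order fields, where (as noted before the theorem) an arbitrary nonzero $g$ need not produce a non-cyclotomic orthomorphism.

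For $q=2^n$ with $n\ge3$ this approach is unavailable: when $2^n-1$ is prime there is no index strictly between $1$ and $q-1$ to translate, and in any case $\F_q$ has no quadratic orthomorphism. Here I would abandon cyclotomy entirely and take the additive map $\theta(x)=cx+\mathrm{Tr}(x)\,u$, where $\mathrm{Tr}:\F_q\to\F_2$ is the absolute trace, $c\in\F_q\setminus\F_2$ and $u\ne0$. Because $\mathrm{Tr}$ is $\F_2$-linear, a short computation shows that $\theta$ and $x\mapsto\theta(x)-x=(c+1)x+\mathrm{Tr}(x)u$ are both bijections precisely when $\mathrm{Tr}(u/c)=\mathrm{Tr}(u/(c+1))=0$; these are two $\F_2$-linear conditions on $u$, so their common solution space has $\F_2$-dimension at least $n-2\ge1$ and therefore contains a nonzero $u$. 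For such a $u$ the relation $\psi(\lambda x)=\lambda\psi(x)$ collapses to $\mathrm{Tr}(\lambda x)=\lambda\,\mathrm{Tr}(x)$ for all $x$; taking $x$ with $\mathrm{Tr}(x)=1$ gives $\lambda=\mathrm{Tr}(\lambda x)\in\F_2$, so $\lambda=1$ and $\theta\in\D_{q-1}$. Reassuringly, the same dimension count yields only $u=0$ when $q=4$, matching the fact that $\D_3(4)=\emptyset$.

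I expect the even case to be the main obstacle, and more precisely the sub-case in which $2^n-1$ is prime: there every tool built from cyclotomy of intermediate index disappears, so the existence of a non-cyclotomic orthomorphism cannot be inherited by translation and must be exhibited by hand. The trace construction above is the device I would use for this, and the one genuinely new computation the proof needs is verifying its two bijectivity conditions \emph{simultaneously} — ruling out $\mathrm{Tr}(u/c)=1$ and $\mathrm{Tr}(u/(c+1))=1$ at the same time — rather than treating each in isolation.
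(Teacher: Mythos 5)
Your proof is correct, but it takes a genuinely different route from the paper's in several respects, so a comparison is worthwhile. Your unifying tool is the ``homogeneity stabilizer'' criterion: an orthomorphism $\psi$ with $\psi(0)=0$ lies in $\D_{q-1}$ as soon as $\psi(\lambda x)=\lambda\psi(x)$ for all $x$ forces $\lambda=1$; this is sound, since $\psi\in\C_\ell$ for a proper divisor $\ell$ of $q-1$ would make every $\lambda$ in the nontrivial subgroup $C_{\ell,0}$ admissible. In odd characteristic your skeleton agrees with the paper's --- both translate a quadratic orthomorphism $\theta\in\D_2$, whose existence follows from \tref{t:numnearlin} --- but the execution differs: the paper translates by $g=1$ and detects non-cyclotomicity through the permutation polynomial normal form \eref{eq:pp}, namely the appearance of the exponent $(q-1)/2$ in $T_1[\theta]$, whereas you translate by a non-square $g$ and get $\lambda=1$ by a direct two-line computation. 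Your verification avoids polynomial manipulation entirely, and the non-square choice transparently sidesteps the composite-order subtlety (Evans, p.~48) that the paper must instead finesse via the polynomial argument. In characteristic $2$ the two proofs genuinely diverge: the paper uses Evans's construction \eref{e:aHa}, a linear map perturbed by a constant on a coset of the four-element additive subgroup $H$, certified non-cyclotomic by the counting argument that $4$ is coprime to $q-1$; you instead perturb a linear map on a coset of the index-two additive subgroup $\ker\mathrm{Tr}$, with an $\F_2$-linear-algebra argument producing the needed $u$ and the same stabilizer computation finishing the job. (Incidentally, the paper's counting argument would also certify your construction, since $q/2$ is coprime to $q-1$.) What each buys: your treatment is self-contained and uniform --- one criterion handles both characteristics, and your dimension count even explains why $q=4$ must fail --- while the paper's even-characteristic example is not merely an existence device, since its translates are reused to build the irregular orthomorphisms over $\F_{2^{2k+1}}$ in the theorem that follows, a payoff your trace construction would not obviously provide.
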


\begin{proof}
  First, we consider the case where $\F_q$ is a field of
  characteristic $2$, with $q\ge8$.  The construction for this case
  was shown to us by A.\,B.~Evans.  Pick $a \in\F_q\setminus\{0, 1\}$.
  Set $H = \{ 0, 1, a, a+1 \} \subseteq \F_q$ and choose $c\notin H$. Let
\begin{equation}\label{e:aHa}
   \theta(x) = \left\{ 
\begin{array}{ll} 
  ax + a(a+1) & \hbox{ if } x \in H + c ,\\ 
  ax & \hbox{ else.} 
\end{array} \right.
\end{equation}
Now $ax \in a(H+c) + a(a+1)$ if and only if $x \in H+c$, which implies
that $\theta$ is a permutation.  Also, $(a-1)x \in (a-1)(H+c)+a(a+1)$
if and only if $x \in H+c$, and hence $\theta$ is an orthomorphism.
Note that $\theta(0)=0$ and there are exactly four nonzero elements $e\in\F_q$
such that $\theta(e)/e\ne a$. Since $4$ is coprime to $q-1$, it follows
that $\theta$ is non-cyclotomic.

Next, we consider the case where $\F_q$ is a field of characteristic
$c > 2$.  We note that every orthomorphism is defined by a permutation
polynomial.  By \cite[p.~47]{Eva92} or \cite[Thm\,1]{NW05}, an 
orthomorphism $\theta\in\C_k$ corresponds to a polynomial of the form
\begin{equation}\label{eq:pp}
  \theta(x) = \sum^{k-1}_{i=0} a_i x^{i(q-1)/k + 1},
\end{equation}
where $a_i\in\F_q$ for $0\le i<k$.
Let $\theta$ be an orthomorphism such that $\theta\in\D_2$ 
(we know such $\theta$ exists, for example, by \tref{t:numnearlin}).  Hence,
$\theta$ is of the form $\theta(x) = a_1 x^{(q+1)/2} + a_0 x$ where $a_1\ne0$.  
In particular, $T_1[\theta]$ is defined as:
\begin{equation*}
  T_1[\theta](x) = a_1 x^{(q+1)/2} + \frac{a_1(q+1)}{2} x^{(q-1)/2} + O(x^{(q-3)/2}),
\end{equation*}
and since this is not of the form in equation $(\ref{eq:pp})$ for any
$k < q-1$, we must have $T_1[\theta]\in\D_{q-1}$.
\end{proof}

In our proof of \tref{t:q-1}, we made use of translation to
construct non-cyclotomic orthomorphisms.  However, these still have
the property that they are easily transformed into a cyclotomic
orthomorphism, and as such still possess an underlying cyclotomic
structure.  
We say that an orthomorphism $\theta$ is {\em irregular} if
$T_g[\theta]$ is non-cyclotomic for all $g\in\F_q$.  We conjecture that
irregular orthomorphisms exist over all sufficiently large fields.
However, here we only prove the much weaker statement that there
are arbitrarily large fields that have irregular orthomorphisms.

\begin{theorem}
  Let $\F_q$ be a field of order $q=2^{2k+1}$ for some positive integer
  $k$.  Then there exists an irregular orthomorphism $\theta$ over $\F_q$.
\end{theorem}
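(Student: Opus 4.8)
The plan is to sidestep the quantifier over all translates by building $\theta$ to be \emph{additive}, i.e.\ $\F_2$-linear. If $\theta(x+g)=\theta(x)+\theta(g)$ for all $x,g$, then $T_g[\theta](x)=\theta(x+g)-\theta(g)=\theta(x)$, so every translate of $\theta$ equals $\theta$ itself. Thus $\theta$ will be irregular as soon as it is a non-cyclotomic orthomorphism, and the whole problem collapses to producing a \emph{single} additive orthomorphism of least index $q-1$. The mechanism forcing least index $q-1$ is the arithmetic of characteristic $2$: since $q=2^{2k+1}$, the order $q-1$ is odd, so for every index $\ell<q-1$ each cyclotomy class $C_{\ell,i}$ has size $(q-1)/\ell\ge3$. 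Consequently, if the ratio $x\mapsto\theta(x)/x$ attains some value at exactly one point of $\F_q^*$, then $\theta$ lies in no $\C_\ell$ with $\ell<q-1$ and is therefore non-cyclotomic.

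For the construction I would take $\theta(x)=ax+v\,\mathrm{Tr}(x)$, where $\mathrm{Tr}=\mathrm{Tr}_{\F_q/\F_2}$, the scalar $a$ lies in $\F_q\setminus\{0,1\}$, and $v\in\F_q^*$ is to be chosen. This map is $\F_2$-linear, so it suffices to check that its kernel and the kernel of $x\mapsto\theta(x)-x=\theta(x)+x$ are trivial. Solving $ax+v\,\mathrm{Tr}(x)=0$ forces $x=0$ on the trace-zero hyperplane and $x=v/a$ on the trace-one layer, and the latter is a genuine kernel element precisely when $\mathrm{Tr}(v/a)=1$; hence $\theta$ is a bijection iff $\mathrm{Tr}(v/a)=0$, and symmetrically $\theta+\mathrm{id}$ is a bijection iff $\mathrm{Tr}(v/(a+1))=0$. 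Both are single $\F_2$-linear constraints on $v$, so the admissible $v$ form the common kernel of two $\F_2$-linear functionals on $\F_q$; this subspace has $\F_2$-dimension at least $(2k+1)-2\ge1$, guaranteeing a nonzero choice of $v$. With such $a$ and $v$, $\theta$ is an orthomorphism.

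It remains to see that $\theta$ has least index $q-1$. Because $v\ne0$, the ratio $\theta(x)/x=a+v\,\mathrm{Tr}(x)/x$ equals $a$ on the trace-zero part but runs through the pairwise distinct values $a+v/x$ as $x$ ranges over the trace-one elements (of which there are $q/2\ge4$). Each such value is attained exactly once, so $x\mapsto\theta(x)/x$ has a singleton level set, and by the first paragraph $\theta$ is non-cyclotomic. Since $\theta$ is additive, every $T_g[\theta]$ equals $\theta$ and is likewise non-cyclotomic, so $\theta$ is irregular.

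The only steps carrying real content are the two kernel computations that convert the orthomorphism property into the pair of trace conditions, and the dimension count that keeps $v$ nonzero; these are where I would be most careful, the latter needing only $q\ge8$. I expect no serious obstacle beyond bookkeeping. I note that this additive route uses only that $q$ is a power of $2$ with $q\ge8$, and not the parity of $2k+1$; if the intended argument is instead a non-additive one tailored to $q=2^{2k+1}$, the two ideas I would still rely on are the singleton-versus-odd-class-size criterion and the reduction ``additive $\Rightarrow$ translation-invariant.''
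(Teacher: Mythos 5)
Your proof is correct, but it takes a genuinely different route from the paper. The paper recycles the orthomorphism $\theta$ already built in equation \eqref{e:aHa} (the characteristic-$2$ construction of Theorem~\ref{t:q-1}) and attacks the translates head-on: it splits into the cases $g\in H+c$ and $g\notin H+c$, counts that $T_g[\theta]$ agrees with $x\mapsto ax$ on exactly $3$, respectively $q-5$, nonzero elements, and invokes coprimality of these counts with $q-1$; the case count $3$ is where the hypothesis $q=2^{2k+1}$ (giving $q\equiv 2\bmod 3$) is genuinely used. You instead trivialize the quantifier over translates by making $\theta$ additive, $\theta(x)=ax+v\,\mathrm{Tr}(x)$, so that $T_g[\theta]=\theta$ identically, and then certify non-cyclotomicity of the single map $\theta$ via a singleton level set of $x\mapsto\theta(x)/x$ --- the sharpest possible instance of the same underlying principle both proofs use, namely that for a cyclotomic map of index $\ell<q-1$ every level set of the ratio is a union of cyclotomy classes of size $(q-1)/\ell>1$ (note you only need size $\ge 2$ here, so your appeal to oddness of $q-1$ is superfluous). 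Your kernel computations and the dimension count for $v$ are sound, requiring only $2k+1\ge 3$. What the paper's approach buys is economy: no new orthomorphism needs to be verified, since \eqref{e:aHa} was already checked. What yours buys is strictly more: it proves existence of irregular orthomorphisms over \emph{every} field $\F_{2^n}$ with $n\ge 3$, not just those of odd exponent, showing that the paper's restriction to $q=2^{2k+1}$ is an artifact of its counting argument rather than intrinsic to the problem.
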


\begin{proof}
  Consider the translates $T_g[\theta]$ of the orthomorphism $\theta$ 
  constructed in \eref{e:aHa}. We will consider two cases.

\medskip\noindent
Case 1:  $g \in H + c$. \\
If $x +g \in H+c$ then
\begin{align*}
   T_g[\theta](x) = \theta(x + g) - \theta(g) 
   = a(x + g) + a(a + 1) + ag + a(a+1) 
   = ax.
\end{align*}
If $x + g \not\in H+c$ then
\begin{align*}
   T_g[\theta](x) &= \theta(x + g) - \theta(g) 
   = ax + ag - ag - a(a+1) 
   = ax + a(a+1).
\end{align*}
In this case we have exactly $3$ non-zero elements satisfying
$T_g[\theta](x) = ax$, and since $3$ is relatively prime to $q-1$
(given that $q=2^{2k+1}\equiv2\mod3$), $T_g[\theta]$ is
non-cyclotomic.

\medskip\noindent
Case 2:  $g \not\in H + c$. \\
If $x +g \in H+c$ then
\begin{align*}
   T_g[\theta](x) &= \theta(x + g) - \theta(g) 
   = a(x + g) + a(a + 1) - ag 
   = ax + a(a+1).
\end{align*}
If $x + g \not\in H+c$ then
\begin{align*}
   T_g[\theta](x) 
   =\theta(x + g) - \theta(g) 
   =ax + ag - ag 
   =ax.
\end{align*}
In this case, there are exactly $q-5$ non-zero elements $x$ satisfying
$T_g[\theta](x) = ax$.  Since $q-5$ is coprime to $q-1$ when $q$ is a
power of $2$, the translate $T_g[\theta]$ is non-cyclotomic. 

As all its translates are non-cyclotomic, $\theta$ is irregular.
\end{proof}

\section{Orthogonal cyclotomic orthomorphisms}\label{s:orthorth}

This section is devoted to the study of orthogonality between cyclotomic
orthomorphisms with given least indices. Our main result 
(\tref{t:isorth}) will give a partial answer to \pref{p:2}. 
The proof will need the following Lemma from \cite{Babai}:

\begin{lemma}\label{l:Babai}
  Suppose $k,t\ge2$ and $k\mid (q-1)$.  Let $a_1, \ldots, a_t$ be distinct
  elements of the finite field $\F_q$.  Then the number of solutions
  $x \in \F_q$ to the system of equations $\eta_k(a_i+x)=1$ where $i=1,
  \ldots, t$ is between $qk^{-t}-t \sqrt{q}$ and $qk^{-t}+t \sqrt{q}$.
\end{lemma}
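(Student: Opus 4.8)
The plan is to count the solution set via a standard character-sum argument and then control the resulting sums with Weil's theorem. For a nonzero $y\in\F_q$ the condition $\eta_k(y)=1$ (that $y$ is a nonzero $k$-th power) has indicator $\frac1k\sum_{j=0}^{k-1}\eta_k^j(y)$, where $\eta_k^0$ denotes the principal character, taken with the convention that every multiplicative character vanishes at $0$ (so that $y=0$ contributes nothing, as it should since $\eta_k(0)\ne1$). Writing $N$ for the quantity to be estimated, I would therefore begin from
\[
N=\sum_{x\in\F_q}\prod_{i=1}^t\frac1k\sum_{j=0}^{k-1}\eta_k^j(a_i+x)
 =\frac1{k^t}\sum_{(j_1,\dots,j_t)}\ \sum_{x\in\F_q}\prod_{i=1}^t\eta_k^{j_i}(a_i+x),
\]
where the outer sum ranges over all $(j_1,\dots,j_t)\in\{0,\dots,k-1\}^t$.

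Next I would split off the main term. For the tuple $(0,\dots,0)$ the inner sum counts those $x$ with $a_i+x\ne0$ for every $i$; since the $a_i$ are distinct this equals $q-t$, contributing $(q-t)/k^t$. For every other tuple, multiplicativity of $\eta_k$ rewrites the inner sum as $\sum_{x}\eta_k\bigl(f(x)\bigr)$ with $f(x)=\prod_{i=1}^t(x+a_i)^{j_i}$, a polynomial of positive degree whose distinct roots are the at most $t$ elements $-a_i$ with $j_i>0$, each of multiplicity $j_i<k$.

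The key point, which I expect to be the step that matters most, is the hypothesis check for Weil's theorem. Because the roots $-a_i$ are distinct and at least one exponent $j_i$ lies strictly between $0$ and $k$, the polynomial $f$ is not a constant multiple of a $k$-th power of a polynomial in $\overline{\F_q}[x]$. Since $\eta_k$ is a nontrivial character of order $k\ge2$, Weil's bound for multiplicative character sums then gives $\bigl|\sum_x\eta_k(f(x))\bigr|\le(t-1)\sqrt q$ for each of the $k^t-1$ nonzero tuples (the factor being one less than the number $\le t$ of distinct roots of $f$).

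Finally I would assemble the estimate. The total error is at most $\frac1{k^t}(k^t-1)(t-1)\sqrt q\le(t-1)\sqrt q$, so $N$ lies within $(t-1)\sqrt q$ of the main value $(q-t)/k^t=qk^{-t}-tk^{-t}$. The harmless shift satisfies $tk^{-t}\le t\,2^{-t}\le\tfrac12\le\sqrt q$ for every admissible $q$, so absorbing it into the error term recentres the estimate at $qk^{-t}$ and widens the half-width from $(t-1)\sqrt q$ to $t\sqrt q$, yielding exactly the claimed interval. The only real care needed is this final bookkeeping together with the verification that $f$ is not a $k$-th power; the rest is routine.
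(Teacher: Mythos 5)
Your approach is the right one and is, in essence, the argument behind this lemma in \cite{Babai}; note that the paper itself does not reprove the statement (it quotes it from that source) but reruns the same character-sum-plus-Weil machinery when proving its refinement, Lemma~\ref{l:Babai2}. However, one step in your write-up is false as stated, even though its conclusion survives. Having adopted the convention that every character, including the principal one $\eta_k^0$, vanishes at $0$ (which you need both for the exact indicator and for the main term $q-t$), you cannot then assert that for a tuple with some $j_i=0$ the inner sum equals $\sum_x\eta_k\bigl(f(x)\bigr)$ with $f(x)=\prod_i(x+a_i)^{j_i}$. At a point $x=-a_{i_0}$ with $j_{i_0}=0$ the left-hand term is $0$ (its $i_0$-th factor is $\eta_k^0(0)=0$), whereas $f(-a_{i_0})=\prod_{i\ne i_0}(a_i-a_{i_0})^{j_i}\ne0$, so the right-hand term is a root of unity of modulus $1$. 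The two sums therefore genuinely differ whenever the tuple has a zero coordinate, by up to one for each index $i$ with $j_i=0$.

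The gap is easily repaired, and it is worth doing so explicitly. If the tuple has $d\ge1$ nonzero exponents, the discrepancy just described has modulus at most $t-d$, while Weil's bound applied to $f$ (which you correctly verify is not a constant times a $k$-th power) gives $\bigl|\sum_x\eta_k(f(x))\bigr|\le(d-1)\sqrt q$. Hence $\bigl|\sum_x\prod_i\eta_k^{j_i}(a_i+x)\bigr|\le(d-1)\sqrt q+(t-d)\le(t-1)\sqrt q$, using $\sqrt q\ge1$; this is exactly the per-tuple bound you claimed, so the rest of your bookkeeping (in particular absorbing the shift $tk^{-t}\le\sqrt q$ to widen $(t-1)\sqrt q$ to $t\sqrt q$) goes through unchanged. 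For comparison, \cite{Babai} and the paper's proof of Lemma~\ref{l:Babai2} dodge this boundary issue differently: they apply Weil directly to $S=\sum_\psi\sum_x\eta_k\bigl(\prod_i(a_i+x)^{\psi(i)}\bigr)$, where no false identity is needed, and then relate the count $N$ to $S$ by the inequality $N\ge(|S|-tk^{t-1})/k^t$, whose correction term $tk^{t-1}$ accounts precisely for the points $x=-a_i$ that cause the trouble in your version.
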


Actually, closer inspection of the proof in \cite{Babai} shows that
the lower bound can be improved somewhat, which will leave us with
fewer cases to check in some of our computations.

\begin{lemma}\label{l:Babai2}
  Suppose $k,t\ge2$ and $k\mid (q-1)$.  Let $a_1, \ldots, a_t$ be distinct
  elements of the finite field $\F_q$.  Then the number of solutions
  $x \in \F_q$ to the system of equations $\eta_k(a_i+x)=1$ where $i=1,
  \ldots, t$ is at least $qk^{-t}-(t-1-t/k+k^{-t})\sqrt{q}-t/k$.
\end{lemma}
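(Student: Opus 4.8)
The plan is to use the standard orthogonality expansion of the indicator for membership in $C_{k,0}$ and then estimate the resulting incomplete character sums via Weil's theorem, taking care to count the distinct roots of the relevant polynomial \emph{exactly} rather than bounding their number crudely by $t$, since this is precisely what sharpens the constant in \lref{l:Babai} to the one claimed here.

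First I would write the indicator that a field element $y$ is a nonzero $k$-th power as $\mathbf{1}[y\in C_{k,0}]=k^{-1}\sum_{j=0}^{k-1}\eta_k^j(y)$, using the convention $\eta_k^j(0)=0$; one checks this equals $1$ when $y\in C_{k,0}$ and $0$ otherwise. Writing $N$ for the number of solutions and expanding the product over $i=1,\dots,t$ gives
\[
N=\frac{1}{k^t}\sum_{\mathbf{j}}S(\mathbf{j}),\qquad S(\mathbf{j})=\sum_{x\in\F_q}\prod_{i=1}^t\eta_k^{j_i}(a_i+x),
\]
where $\mathbf{j}=(j_1,\dots,j_t)$ runs over $\{0,1,\dots,k-1\}^t$. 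For $\mathbf{j}=\mathbf{0}$ every factor is the principal character, so $S(\mathbf{0})$ counts the $x$ avoiding the $t$ distinct values $-a_1,\dots,-a_t$, yielding the main term $S(\mathbf{0})=q-t$.

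For $\mathbf{j}\neq\mathbf{0}$ let $s(\mathbf{j})$ be the number of nonzero entries. The key to the sharp constant is that the factors with $j_i=0$ are principal characters that merely delete the points $x=-a_i$ from the sum. I would therefore relate $S(\mathbf{j})$ to the complete Weil sum $W(\mathbf{j})=\sum_{x}\prod_{i:\,j_i\neq0}\eta_k^{j_i}(a_i+x)$: the two differ only at the at most $t-s(\mathbf{j})$ deleted points, at each of which the product has modulus $1$ (the $a_i$ being distinct), so $|S(\mathbf{j})-W(\mathbf{j})|\le t-s(\mathbf{j})$. Weil's theorem applied to $f(x)=\prod_{i:\,j_i\neq0}(x+a_i)^{j_i}$, which has exactly $s(\mathbf{j})$ distinct roots and is not a constant times a $k$-th power of a polynomial (each multiplicity $j_i$ lies strictly between $0$ and $k$), gives $|W(\mathbf{j})|\le(s(\mathbf{j})-1)\sqrt{q}$. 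Hence $|S(\mathbf{j})|\le(s(\mathbf{j})-1)\sqrt{q}+(t-s(\mathbf{j}))$.

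Finally I would assemble $N\ge k^{-t}\big(S(\mathbf{0})-\sum_{\mathbf{j}\neq\mathbf{0}}|S(\mathbf{j})|\big)$ and evaluate the two combinatorial sums. Since $s(\mathbf{j})=\sum_i\mathbf{1}[j_i\neq0]$, summing over all $k^t$ tuples gives $\sum_{\mathbf{j}}s(\mathbf{j})=t(k-1)k^{t-1}$, whence $\sum_{\mathbf{j}\neq\mathbf{0}}(s(\mathbf{j})-1)=(t-1)k^t-tk^{t-1}+1$ and $\sum_{\mathbf{j}\neq\mathbf{0}}(t-s(\mathbf{j}))=tk^{t-1}-t$. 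Substituting, the non-$\sqrt{q}$ constants combine as $-tk^{-t}-k^{-t}(tk^{t-1}-t)=-t/k$, and the $\sqrt{q}$ coefficient becomes $t-1-t/k+k^{-t}$, giving exactly the stated bound. The main obstacle will be the bookkeeping around the principal-character factors: replacing $s(\mathbf{j})$ by the uniform $t$ only reproduces \lref{l:Babai}, so the entire gain rests on (a) counting the distinct roots of $f$ as $s(\mathbf{j})$ in Weil's bound, and (b) accounting precisely for the deleted points $x=-a_i$ with $j_i=0$, whose contribution is what converts a leftover $-tk^{-t}$ into the cleaner $-t/k$. Checking that $f$ is never a constant multiple of a $k$-th power (so Weil applies for every $\mathbf{j}\neq\mathbf{0}$) is routine, given that the $a_i$ are distinct and $0<j_i<k$.
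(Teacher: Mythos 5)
Your proof is correct and follows essentially the same route as the paper's: both expand the solution count via multiplicative-character orthogonality and apply Weil's theorem term-by-term, with the key refinement of bounding each sum by $(d-1)\sqrt{q}$ where $d$ is the number of distinct roots of the relevant polynomial, and both arrive at the same aggregate correction (your per-term deletions of $t-s(\mathbf{j})$ total exactly the paper's $tk^{t-1}$). The only difference is presentational: you re-derive the relationship between $N$ and the complete character sums explicitly, whereas the paper cites it from Babai, G\'al and Wigderson.
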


\begin{proof}
  We use similar notation and logic to \cite{Babai}.  Let $N$ denote
  the number of solutions to the given system of equations.  Let
  $\eta_k$ be a multiplicative character of order $k$. Let $\Psi^*$
  denote the set of functions
  $\psi:\{1,\dots,t\}\rightarrow\{0,\dots,k-1\}$ that are not
  everywhere zero. Let
$$S=q+\sum_{\psi\in\Psi^*}\sum_{x\in\F_q}
\eta_k\left(\prod_{i=1}^t(a_i+x)^{\psi(i)}\right).
$$
Using Weil's theorem \cite[Thm 3.7]{Babai} to bound the terms in $S$, 
but bounding each term
according to its number $d$ of distinct roots, we have
\begin{align*}
|S|\ge q-\sum_{d=1}^t\binom{t}{d}(k-1)^d(d-1)\sqrt{q}
=q-\big((t-1)k^t-tk^{t-1}+1\big)\sqrt{q}.
\end{align*}
Hence, using the relationship between $N$ and $S$ from \cite{Babai},
\begin{align*}
N\ge\frac{|S|-tk^{t-1}}{k^t}=qk^{-t}-(t-1-t/k+k^{-t})\sqrt{q}-t/k,
\end{align*}
as claimed.
\end{proof}

\begin{theorem}\label{t:isorth} 
  Let $B=b_1,\ldots,b_n$ be a finite sequence of positive integers,
  and let $c$ be a positive integer.  Define $t=2+\sum_{i=1}^nb_i$ and 
  $k=\lcm(b_1, \ldots b_n, c)$ and let $q_0$ be the smallest positive
  integer satisfying $q_0k^{-t}-(t-1-t/k+k^{-t})\sqrt{q_0}-t/k>1$.  
  Let $\F_q$ be a
  field such that $k\mid(q-1)$ and $q\ge q_0$.  Given any set of orthomorphisms
  $T=\{\theta_1, \ldots, \theta_n\}$ over $\F_q$, where $\theta_i\in\C_{b_i}$ 
  for $1\le i\le n$, there exists a near-linear
  orthomorphism, $\phi \in \D_c$, which is orthogonal to each
  of the orthomorphisms in $T$.
\end{theorem}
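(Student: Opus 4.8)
The plan is to construct $\phi$ as a near-linear orthomorphism of index $c$, say $\phi=[a_0,a_1,a_1,\dots,a_1]$, because by \lref{l:nearlinDk} any such $\phi$ automatically lies in $\D_c$, and \lref{l:nearlin} reduces ``near-linear orthomorphism'' to the three requirements $a_0\ne a_1$, $\eta_c(a_0)=\eta_c(a_1)$ and $\eta_c(a_0-1)=\eta_c(a_1-1)$. I would try to meet all of these, together with orthogonality to each $\theta_j$, by the crude but convenient device of forcing the relevant field elements into the single cyclotomy class $C_{k,0}$ at the common index $k=\lcm(b_1,\dots,b_n,c)$. Since $C_{k,0}\subseteq C_{c,0}$, demanding $a_0,a_1,a_0-1,a_1-1\in C_{k,0}$ already secures the near-linearity conditions.

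For the orthogonality I would view each $\theta_j$ at index $k$ (legitimate because $b_j\mid k$) and apply \lref{l:characterise}: $\phi$ and $\theta_j$ are orthogonal as soon as the difference map $C_{k,i}\mapsto(\phi_i-\theta_{j,i})C_{k,i}$ permutes the cyclotomy classes, where $\phi_i,\theta_{j,i}$ denote the multipliers on $C_{k,i}$. By the observation following \lref{l:characterise}, it suffices that all these differences lie in one class. The backbone of the construction is to treat the bulk multiplier as the free unknown, $a_1=x$, and to impose $\eta_k(x-d)=1$ as $d$ ranges over the multipliers of the $\theta_j$. There is one such condition for each of the $b_j$ multipliers of $\theta_j$, giving $\sum_j b_j$ conditions in all, and these make the linear map $x\mapsto a_1 x$ orthogonal to every $\theta_j$ with difference map equal to the identity on classes. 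Adding the two conditions $\eta_k(x)=1$ and $\eta_k(x-1)=1$ (which also make this linear map an orthomorphism and place $a_1,a_1-1$ in $C_{k,0}$) yields a system of the shape $\eta_k(a_i+x)=1$ in exactly $t=2+\sum_j b_j$ elements $a_i$; after deleting any coincidences the $a_i$ are distinct, which is all that \lref{l:Babai2} requires.

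I would then invoke \lref{l:Babai2}: the number of admissible $x$ is at least $qk^{-t}-(t-1-t/k+k^{-t})\sqrt q-t/k$, which exceeds $1$ precisely when $q\ge q_0$. The strict inequality ``$>1$'' rather than ``$>0$'' is exactly what lets me discard the one forbidden value of $x$ that would force $a_0=a_1$ and still retain a usable $a_1$. The hard part, and the step I would spend the most care on, is the final one: having fixed a good bulk multiplier $a_1=x$, I must still choose the special multiplier $a_0$ so that $\phi$ remains an orthomorphism, stays genuinely near-linear, and---the genuinely delicate point---is still orthogonal to each $\theta_j$ on the one class $C_{c,0}$ that has been perturbed. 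On the bulk classes the difference map is the identity, so the index-$k$ classes lying inside $C_{c,0}$ must be permuted among themselves; arranging this by a suitable choice of $a_0\in C_{k,0}$ with $a_0-1\in C_{k,0}$ and $a_0\ne a_1$, and confirming that such an $a_0$ exists once $q\ge q_0$, is where the real work lies. With $a_0$ so chosen, $\phi$ is the desired near-linear orthomorphism, and \lref{l:nearlinDk} places it in $\D_c$.
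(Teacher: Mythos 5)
Your setup is the right one --- near-linear $\phi$ of index $c$, all conditions expressed as $\eta_k(\cdot)=1$ at the common index $k$, the count $t=2+\sum_i b_i$, and Lemma~\ref{l:Babai2} to produce solutions --- but the proof has a genuine gap exactly at the step you flag as ``where the real work lies,'' and that step is never closed. Your plan fixes only the bulk multiplier $a_1=x$ as a solution of the system, and then asks for $a_0\in C_{k,0}$ with $a_0-1\in C_{k,0}$ and $a_0\ne a_1$. Those three conditions do make $\phi=[a_0,a_1,\dots,a_1]$ a near-linear orthomorphism in $\D_c$, but they are \emph{not} sufficient for orthogonality: on the index-$k$ classes inside $C_{c,0}$ the difference multipliers are $a_0-a_{i,j}$, and nothing you have imposed forces $\eta_k(a_0-a_{i,j})=1$ (or even forces the map on those classes to be a permutation). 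Since the bulk classes are all fixed by the difference map, the perturbed classes must be permuted among themselves, and your conditions on $a_0$ give no control over that. So the proposal, as written, proves membership in $\D_c$ but not orthogonality to the $\theta_i$.

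The missing idea is that $a_0$ must satisfy the \emph{same} full system of character conditions as $a_1$, and this is precisely what the threshold ``$>1$'' (rather than ``$>0$'') in the hypothesis buys: Lemma~\ref{l:Babai2} then yields \emph{two distinct} solutions $x_1\ne x_2$ of the system $\eta_k(x+a)=1$, $a\in A=\bigcup_i\{-a_{i,0},\dots,-a_{i,b_i-1}\}\cup\{0,-1\}$, and one sets $\phi=[x_1,x_2,\dots,x_2]$. Then $\eta_k(x_1-a_{i,j})=\eta_k(x_2-a_{i,j})=1$ for every multiplier of every $\theta_i$, so \emph{every} index-$k$ class --- including those inside the perturbed class $C_{c,0}$ --- is mapped to itself by the difference map, and Lemma~\ref{l:characterise} gives orthogonality; the conditions at $a=0,-1$ give $\phi\in\D_c$ via Lemmas~\ref{l:nearlin} and \ref{l:nearlinDk}. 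Your reading of ``$>1$'' as merely allowing you to discard one forbidden value of the bulk multiplier is therefore a misdiagnosis: its actual role is to supply the second solution that serves as $a_0$, which is the step your argument leaves open.
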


\begin{proof}
Since $q\ge q_0$ and $t-1-t/k+k^{-t}\ge 0$, we have
\begin{equation}\label{e:qbigenough}
qk^{-t}-\big(t-1-\frac{t}{k}+k^{-t}\big)\sqrt{q}-\frac{t}{k}\ge
\Big(q_0k^{-t}-\big(t-1-\frac{t}{k}+k^{-t}\big)\sqrt{q_0}-\frac{t}{k}\Big)\frac{q}{q_0}>1.
\end{equation}

  Suppose that $\theta_i=[a_{i,0},a_{i,1},\ldots,a_{i,b_i-1}]$. Define
  $A_i=\{-a_{i,0},-a_{i,1},\ldots,-a_{i,b_i-1}\}$ and
  $A=\cup_{i=1}^nA_i\cup \{0,-1\}$.  Consider the system of equations
  $\eta_k(x+a)=1$ for each $a\in A$.  By \eref{e:qbigenough} and
  \lref{l:Babai2}, we can find
  two distinct solutions $x=x_1$ and $x=x_2$.  Define 
  $\phi=[x_1,x_2,\ldots,x_2]$.  By choice,
  $\eta_k(x_1)=\eta_k(x_2)=1=\eta_k(x_1-1)=\eta_k(x_2-1)$, so
  $\phi\in\D_c$, by \lref{l:nearlinDk} and
  \lref{l:nearlin}. Similarly, since
  $\eta_k(x_1-a_{i,j})=\eta_k(x_2-a_{i,j})=1$ for $1\le i\le n$ and
  $0\le j<b_i$, we see by \lref{l:characterise} that $\phi$ is
  orthogonal to each $\theta_i\in T$.
\end{proof}

Repeatedly applying the method in the proof of \tref{t:isorth},
and noting that the set $A$ grows by only $2$ elements at each
iteration, we get:

\begin{corollary}\label{cy:orthnearlin}
  Let $B=b_1,b_2,\ldots,b_n$ be a finite sequence of positive integers.
  Define $t=2n$ and $k=\lcm(b_1,\ldots,b_{n})$.  Let $q_0$ be the smallest
  positive integer satisfying the inequality 
  $q_0k^{-t}-(t-1-t/k+k^{-t})\sqrt{q_0}-t/k>1$.  
  Let $\F_q$ be a field such that $k\mid(q-1)$ and $q\ge q_0$.  Then
  there exists a set of mutually orthogonal orthomorphisms
  $\{\theta_1,\ldots,\theta_n\}$ over $\F_q$, where
  $\theta_i\in\D_{b_i}$ for $1\le i\le n$.
\end{corollary}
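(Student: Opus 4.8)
The plan is to obtain Corollary~\ref{cy:orthnearlin} by iterating the construction in the proof of \tref{t:isorth} $n$ times, building the mutually orthogonal orthomorphisms $\theta_1,\dots,\theta_n$ one at a time. At the $m$-th stage I will have already constructed near-linear orthomorphisms $\theta_1\in\D_{b_1},\dots,\theta_{m-1}\in\D_{b_{m-1}}$ that are pairwise orthogonal, and I want to produce a near-linear $\theta_m\in\D_{b_m}$ orthogonal to each of $\theta_1,\dots,\theta_{m-1}$. The key observation is that \tref{t:isorth}, applied with the single target index $c=b_m$ and the list of already-built orthomorphisms, does exactly this: it finds a near-linear $\phi\in\D_{b_m}$ orthogonal to each member of the current set.

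First I would set up the bookkeeping on $t$. In \tref{t:isorth} the parameter $t$ counts the size of the set $A=\bigcup_i A_i\cup\{0,-1\}$ on which we impose the character conditions $\eta_k(x+a)=1$, and it equals $2+\sum b_i$ because each near-linear $\theta_i$ uses only two distinct multipliers, hence contributes at most two distinct elements to $A_i$ (its multiplier list is $[a_{i,0},a_{i,1},\dots,a_{i,1}]$). This is the point flagged in the sentence preceding the corollary: ``the set $A$ grows by only $2$ elements at each iteration.'' So when I invoke \tref{t:isorth} at the final stage $m=n$, the relevant set $A$ has size at most $2(n-1)+2=2n$, giving the value $t=2n$ stated in the corollary, and $k=\lcm(b_1,\dots,b_n)$ absorbs the target index $c=b_n$ automatically since $b_n\mid k$. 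I would then note that the hypothesis $q\ge q_0$ with $q_0$ defined by the displayed inequality is precisely what \tref{t:isorth} needs (via \eref{e:qbigenough} and \lref{l:Babai2}) to guarantee two distinct common solutions $x_1,x_2$, and hence to produce the desired near-linear $\phi=[x_1,x_2,\dots,x_2]\in\D_{b_n}$.

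The one genuine subtlety, which I expect to be the main obstacle, is verifying that a single value of $q_0$ computed from $t=2n$ and $k=\lcm(b_1,\dots,b_n)$ suffices uniformly for all $n$ iterations, not just the last. At earlier stages the effective $t$ is smaller (only $2+\sum_{i<m}b_i\le 2n$ elements in $A$), and the effective lcm may be a proper divisor of $k$; I must check that the bound in \lref{l:Babai2} is monotone in the right directions so that the $q_0$ threshold for the full problem dominates every intermediate threshold. Since the left-hand side $qk^{-t}-(t-1-t/k+k^{-t})\sqrt q-t/k$ is decreasing in $t$ and the modulus constraint $k\mid(q-1)$ for the full $k$ already forces every $b_i\mid(q-1)$, each intermediate application inherits a valid ambient field; I would argue that imposing the character conditions with respect to the order-$k$ character $\eta_k$ (rather than a smaller-order character) only adds constraints, so two common solutions for the full system certainly give two common solutions at each sub-stage. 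Once this monotonicity is in hand, the induction closes: after $n$ steps we have $\theta_i\in\D_{b_i}$ for each $i$, pairwise orthogonal by construction, completing the proof.
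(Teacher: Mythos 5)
Your proposal is correct and takes essentially the same route as the paper, whose entire proof is the one-line remark that one repeatedly applies the method of \tref{t:isorth}, the set $A$ growing by only two elements per iteration because each orthomorphism constructed is near-linear. One small slip: at stage $m$ the set $A$ has at most $2m\le 2n$ elements (not $2+\sum_{i<m}b_i$ as your parenthetical claims, which can exceed $2n$), but this does not affect the argument, since the bound of \lref{l:Babai2} is monotone decreasing in $t$, exactly as you invoke it.
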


This gives a partial answer to \pref{p:2} (in fact, we do not use the
assumption in \pref{p:2} that we are working in a field of odd prime
order). The answer is that any conceivable set of orthogonalities is
achievable, provided the field is large enough.  The measure of
``large enough'' we have given is likely to be quite conservative in
the sense that orthogonal orthomorphisms will often exist for much
smaller orders.  For example, [8,31], [14,44,44], [47,11,11,11,11] are
orthogonal near-linear orthomorphisms of indices $2,3,5$ over
$\F_{61}$.  Also [165,121], [111,326,326], [90,132,132,132,132],
[47,175,175,175,175,175,175], are orthogonal near-linear
orthomorphisms of indices $2,3,5,7$ over $\F_{421}$. Nonetheless, some
small fields do not possess orthogonal orthomorphisms of all the
indices they plausibly might, so asymptotic results are probably the
best we can hope for in general.  The main opportunity for improving
the scope of our results might be to prove existence of sets of
orthogonal cyclotomic orthomorphisms, at least one of which has an
index comparable in size to the order of the field.  We can say little
about such sets at present.

A similar result to \tref{t:isorth} holds if we impose more structure
on our orthomorphisms.  An orthomorphism $\theta$ is called a {\em
  strong orthomorphism} (also called a {\em strong complete mapping})
if it is an orthomorphism over a field $\F$
and if $x\mapsto\theta(x) + x$ is also a permutation of $\F$.
We say that a strong orthomorphism is a {\em cyclotomic strong
  orthomorphism} of index $k$ if it is both a strong orthomorphism,
and a cyclotomic orthomorphism of index $k$.  For a survey of results
on strong orthomorphisms see \cite{Eva13} and for a proof of the
existence of cyclotomic strong orthomorphisms of index 2, see
\cite{Bel13}.  We can easily adapt \tref{t:isorth} to prove the
asymptotic existence of cyclotomic strong orthomorphisms of any given
least index.

\begin{theorem}\label{t:isstrongorth} 
  Let $B=b_1, \ldots, b_n$ be a finite sequence of positive integers,
  and let $c$ be a positive integer.  Define $t=3+\sum_{i=1}^nb_i$ and 
  $k=\lcm(b_1, \ldots b_n, c)$ and let $q_0$ be the smallest
  positive integer such that $q_0k^{-t}-(t-1-t/k+k^{-t})\sqrt{q_0}-t/k>1$.  
  Let $\F_q$ be a field such that $k\mid(q-1)$ and $q\ge q_0$.  Given
  any set of orthomorphisms $T=\{\theta_1, \ldots, \theta_n\}$ over
  $\F_q$, where $\theta_i\in\C_{b_i}$ for $1\le i\le n$, there exists
  a near-linear strong orthomorphism, $\phi \in \D_c$, which is
  orthogonal to each of the orthomorphisms in $T$.
\end{theorem}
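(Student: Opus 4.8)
The plan is to run the proof of \tref{t:isorth} essentially unchanged, introducing exactly one new constraint on the near-linear orthomorphism $\phi$ we build, namely the one that upgrades it from an orthomorphism to a strong orthomorphism. Recall that $\phi=[x_1,x_2,\ldots,x_2]$ is a strong orthomorphism precisely when, in addition to the two class maps governed by \lref{l:characterise}, the map $C_{k,i}\mapsto(x_\ell+1)C_{k,i}$ also permutes the cyclotomy classes; this is the obvious strong-orthomorphism analogue of \lref{l:characterise}, proved the same way directly from the definition. The cheapest way to force every one of these class maps to be a permutation is, as in the near-linear construction, to push all the relevant multipliers into $C_{k,0}$; for the new condition this simply means demanding $\eta_k(x_\ell+1)=1$ for $\ell=1,2$.

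First I would set up the linear system exactly as before, but with one extra member in the target set. With $\theta_i=[a_{i,0},\ldots,a_{i,b_i-1}]$ and $A_i=\{-a_{i,0},\ldots,-a_{i,b_i-1}\}$, I would take
\[
A=\bigcup_{i=1}^n A_i\cup\{0,-1,1\},
\]
and seek $x$ with $\eta_k(x+a)=1$ for every $a\in A$. The only difference from \tref{t:isorth} is the new element $1$, which accounts exactly for the increase from $t=2+\sum b_i$ to $t=3+\sum b_i$ in the hypothesis. Since $|A|\le t$, \lref{l:Babai2} together with \eref{e:qbigenough} (now read with this larger $t$) again guarantees more than one solution, so I can select two distinct solutions $x=x_1$ and $x=x_2$ and put $\phi=[x_1,x_2,\ldots,x_2]\in\C_c$.

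The verification then splits into three independent checks, each the image of a block of equations in the system. The conditions $\eta_k(x_\ell)=1=\eta_k(x_\ell-1)$ give, via \lref{l:nearlin} and \lref{l:nearlinDk}, that $\phi$ is a near-linear orthomorphism in $\D_c$. The conditions $\eta_k(x_\ell-a_{i,j})=1$ place every difference $a_{i,j}-x_\ell$ in the single class $-C_{k,0}$, so by \lref{l:characterise} $\phi$ is orthogonal to each $\theta_i$, exactly as in \tref{t:isorth}. Finally the new conditions $\eta_k(x_\ell+1)=1$ put each multiplier $x_\ell+1$ in $C_{k,0}$, whence $C_{k,i}\mapsto(x_\ell+1)C_{k,i}$ fixes every cyclotomy class and $x\mapsto\phi(x)+x$ is a permutation; thus $\phi$ is a strong orthomorphism.

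The point needing the most care is simply to confirm that strengthening to a strong orthomorphism costs only one extra equation, and hence only bumps $t$ by one; everything else — the Weil-type count, the choice of $q_0$, and the ``differences in one class'' observation following \lref{l:characterise} — is inherited verbatim from \tref{t:isorth}. A minor wrinkle worth noting is that in characteristic $2$ the elements $1$ and $-1$ coincide, so $A$ can have fewer than $t$ members; this only shrinks the system, and since the guaranteed count in \lref{l:Babai2} is decreasing in the number of equations, it can only increase the number of solutions. Hence no genuine obstacle arises, and the argument goes through for all $q\ge q_0$.
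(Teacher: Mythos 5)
Your proposal is correct and is essentially identical to the paper's proof, which simply says to repeat the argument of \tref{t:isorth} with $A=\bigcup_{i=1}^nA_i\cup\{0,-1,1\}$; your spelled-out verification of the strong-orthomorphism condition and the characteristic-$2$ remark are sound elaborations of the same idea.
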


\begin{proof}
  The proof is the same as for \tref{t:isorth}, except we put
  $A=\cup_{i=1}^nA_i\cup \{0,-1, 1\}$.
\end{proof}

\begin{corollary}\label{cy:orthnearlinstrong}
  Let $B=b_1,b_2,\ldots,b_n$ be a finite sequence of positive integers.
  Define $t=2n+1$ and $k=\lcm(b_1,\ldots,b_{n})$.  Let $q_0$ be the smallest
  positive integer satisfying the inequality 
  $q_0k^{-t}-(t-1-t/k+k^{-t})\sqrt{q_0}-t/k>1$.  
  Let $\F_q$ be a field such that $k\mid(q-1)$ and $q\ge q_0$.  Then
  there exists a set of mutually orthogonal strong orthomorphisms
  $\{\theta_1,\ldots,\theta_n\}$ over $\F_q$, where
  $\theta_i\in\D_{b_i}$ for $1\le i\le n$.
\end{corollary}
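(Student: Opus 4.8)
The plan is to follow the derivation of \cyref{cy:orthnearlin}, replacing the method in the proof of \tref{t:isorth} by the method in the proof of \tref{t:isstrongorth}, so that every orthomorphism produced is additionally a strong orthomorphism. I would construct $\theta_1,\ldots,\theta_n$ one at a time. Starting from $T=\emptyset$, I run the construction of \tref{t:isstrongorth} with $c=b_1$ to obtain a near-linear strong orthomorphism $\theta_1\in\D_{b_1}$. Having built pairwise orthogonal near-linear strong orthomorphisms $\theta_1,\ldots,\theta_{j-1}$, I run the same construction with $T=\{\theta_1,\ldots,\theta_{j-1}\}$ and $c=b_j$ to produce $\theta_j=[x_1,x_2,\ldots,x_2]\in\D_{b_j}$ that is near-linear, strong, and orthogonal to every earlier $\theta_i$. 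Throughout I use the single character $\eta_k$ with $k=\lcm(b_1,\ldots,b_n)$; this is legitimate because each $b_i\mid k\mid(q-1)$, and because demanding $\eta_k(\cdot)=1$ forces the relevant differences into $C_{k,0}$, which is the strongest condition needed to invoke \lref{l:characterise}, \lref{l:nearlin} and the strong-orthomorphism variant used in \tref{t:isstrongorth}.

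The key point, exactly as for \cyref{cy:orthnearlin}, is that each near-linear orthomorphism has only two distinct multipliers, so each previously constructed $\theta_i$ contributes at most two elements to the set $A$ of the relevant system $\eta_k(x+a)=1$, $a\in A$. In the strong case the fixed part of $A$ is $\{0,-1,1\}$ rather than $\{0,-1\}$. Hence at the $j$-th stage $|A|\le 2(j-1)+3$, and in particular the number of equations never exceeds $2n+1=t$. This is why the list length drops from the nominal $\sum_i b_i$ of \tref{t:isstrongorth} to the constant $2n+1$ here.

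It remains to verify that the single threshold $q_0$, defined with $t=2n+1$, is good enough at every stage. The lower bound of \lref{l:Babai2} is monotone in the number of equations: decreasing that number increases $qk^{-t}$ and decreases each subtracted term, so the bound only improves. Thus with the actual count $|A|\le 2n+1$ at each stage, \lref{l:Babai2} gives at least as many solutions as the bound with $t=2n+1$; and the scaling argument \eref{e:qbigenough} shows that this bound exceeds $1$ for all $q\ge q_0$. Therefore at every stage there are two distinct solutions $x_1\ne x_2$, which automatically avoid $0,1,-1$, completing the induction. I expect the only real obstacle to be this bookkeeping --- insisting on near-linear orthomorphisms so that $|A_i|\le 2$, and checking that the fixed $k$ together with the monotonicity of \lref{l:Babai2} lets one $q_0$ serve all $n$ applications at once; the remaining verifications are a direct transcription of the proofs of \tref{t:isstrongorth} and \cyref{cy:orthnearlin}.
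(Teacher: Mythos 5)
Your proposal is correct and takes essentially the same approach as the paper: the paper derives this corollary exactly by iterating the construction of \tref{t:isstrongorth} (just as \cyref{cy:orthnearlin} iterates \tref{t:isorth}), noting that each previously built near-linear orthomorphism contributes only its two distinct multipliers to the set $A$, so that $|A|\le 2n+1=t$ at every stage. The monotonicity of the bound in \lref{l:Babai2} that you spell out is precisely the bookkeeping the paper leaves implicit.
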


The previous section was devoted to working out when $\D_b$ is
nonempty.  As a side remark, we note two alternate proofs that $\D_b$
is non-empty for $q$ large relative to $b$. The first is by applying
\tref{t:isorth} in the case where $B$ is the empty sequence. The
second is to use recent work by Bell~\cite{Bel13}. He showed that
$|\C_k|=k!^2k^{-2k}q^k(1+O(q^{-1/2}))$ for $k$ fixed, as
$q\rightarrow\infty$ with $q\equiv1\mod k$. It follows that
\begin{equation}\label{e:Dksize}
|\D_k|=|\C_k|-\sum_{\ell<k\atop\ell\mid k}|\C_\ell|=k!^2k^{-2k}q^k\big(1+O(q^{-1/2})\big).
\end{equation}
See \cite{CL97,MMW06,SW10} for some congruences satisfied by the
number of orthomorphisms of fields of prime order.  For non-trivial
bounds on the number of all orthomorphisms see
\cite{CW10,GL16,MMW06,Tar15}, although the known lower bounds
only apply to fields of prime order.  It would be of interest to find
analogous lower bounds for fields of composite order. In that
direction, we note the following direct corollary of
\cite[Thm~3]{NW05}.

\begin{theorem}\label{t:explowbnd}
  Let $q$ be a prime power such that $q\equiv1\mod k$ for some
  $k>2$. Then $\F_q$ has at least $2^{(q-1)/k}$ orthomorphisms.
\end{theorem}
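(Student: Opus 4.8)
The theorem is billed as a direct corollary of \cite[Thm~3]{NW05}, so the intended proof is presumably a one-line specialisation of that bound. Since I would want a self-contained route, I would instead aim to reconstruct such a bound directly, and the key first step is to see why the exponent is $(q-1)/k$ rather than $k$. I would set $m=(q-1)/k$ and let $K$ be the unique subgroup of $\F_q^*$ of order $k$, which exists because $k\mid(q-1)$. The group $\F_q^*$ then splits into the $m$ cosets of $K$, each of size $k\ge3$, and it is this partition into $m$ blocks --- rather than the $k$ cyclotomy classes $C_{k,i}$ --- that produces the exponent $(q-1)/k$. Note that the hypothesis $k>2$ is exactly what forces each block to have size at least $3$, which I expect to be essential below.

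Next I would build orthomorphisms block by block, starting from a base linear orthomorphism $\theta_0(x)=cx$ with $c\ne0,1$. Since $\theta_0$ sends each coset $\gamma K$ to $c\gamma K$, while $x\mapsto\theta_0(x)-x$ sends $\gamma K$ to $(c-1)\gamma K$, the images of both maps are pinned down at the coset level. The plan is to modify $\theta_0$ independently on each of the $m$ blocks, insisting that on every block the modified map and its difference still map bijectively onto the same target cosets $c\gamma K$ and $(c-1)\gamma K$. Because the blocks partition $\F_q^*$ and each local modification leaves both coset-level images intact, any combination of local choices glues to a global orthomorphism (by \lref{l:characterise}), and distinct combinations give maps that differ within any block where the choices disagree. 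Hence if each block admits at least two valid local completions, multiplying the local counts yields at least $2^m=2^{(q-1)/k}$ distinct orthomorphisms.

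The crux --- and the step I expect to be the main obstacle --- is the purely local claim: within a single coset $\gamma K$ of size $k\ge3$ there are at least two bijections $\phi$ onto $c\gamma K$ for which $x\mapsto\phi(x)-x$ also maps bijectively onto $(c-1)\gamma K$. One such $\phi$ is $\theta_0$ itself, so what is needed is a second one, for instance a $3$-cycle of the $\theta$-values supported on three points of the coset, chosen so that the three resulting difference values coincide as a set with the original three; this keeps both target cosets fixed and is precisely the sort of modification ruled out for blocks of size $1$ or $2$ by the fact that two orthomorphisms differ on at least three elements (c.f.~\cite{CW10}), explaining again why $k>2$ is required. Proving that such a local alternative always exists, uniformly over all admissible $q$, is the delicate point: I would attempt it either by an explicit choice of three points in each coset that solves the required matching of difference values, or, failing a clean closed form, by a Weil-type count of local completions in the spirit of the estimates used elsewhere in this paper, with a finite check disposing of the smallest fields.
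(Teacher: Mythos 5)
Your overall shape --- a product of at least two independent local choices over the $(q-1)/k$ cosets of the subgroup $K$ of order $k$ --- is the right one, and you correctly identify where $k>2$ must enter. (For reference, the paper offers no argument at all: it presents the statement as a direct corollary of Theorem~3 of Niederreiter and Winterhof \cite{NW05}, which is in essence the two-multiplier construction described below.) However, your proposal has a genuine gap at exactly the step you flag as delicate, and it is not merely an unverified step: the local claim is \emph{false} as you state it. Fix a base linear orthomorphism $\theta_0(x)=cx$ and take $k=3$, so each block is a coset $\gamma K$ with $K=\{1,\omega,\omega^2\}$. Writing a candidate completion as $\phi(\gamma u)=c\gamma\psi(u)$ for a bijection $\psi$ of $K$, one can check all six choices of $\psi$ by hand: transpositions never yield a bijective difference map, and the two $3$-cycles (necessarily $\psi(u)=\omega u$ or $\psi(u)=\omega^2 u$) work if and only if $c^2-c+1=0$, i.e.\ only when $c$ is a primitive sixth root of unity. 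So for a generic choice of $c$ each block admits exactly one valid completion, namely $\theta_0$ itself, and your product argument produces nothing. Neither of your suggested repairs closes this: the $3$-cycle ansatz forces, besides that condition on $c$, the constraint that $cx_2-c^2x_3$ land back in the same coset; and a Weil-type count cannot help because each block has bounded size $k$ (possibly $3$) while the error terms are of order $\sqrt q$ --- indeed \lref{l:Babai} applied with index $(q-1)/k$ gives the vacuous bound $qk^2/(q-1)^2-2\sqrt{q}<0$ once $q$ is large.

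The missing idea is to make both local choices multiplicative and coherent across blocks. Set $d=(q-1)/k$, so the blocks are the cyclotomy classes $C_{d,i}$. By \tref{t:numnearlin} the number of near-linear orthomorphisms of index $d$ is $(q-1-d)(q-1-2d)/d^2=(k-1)(k-2)$, which is positive precisely because $k>2$; note that this \emph{exact} count succeeds where Weil-type estimates fail. Hence by \lref{l:nearlin} there exist $c_1\ne c_2$ with $\eta_d(c_1)=\eta_d(c_2)$ and $\eta_d(c_1-1)=\eta_d(c_2-1)$. Now for any of the $2^d$ assignments $a_i\in\{c_1,c_2\}$, the cyclotomic map $[a_0,\dots,a_{d-1}]$ of index $d$ is an orthomorphism by \lref{l:characterise}: every class $C_{d,i}$ is shifted by the common class of $c_1,c_2$, and every difference class by the common class of $c_1-1,c_2-1$, so both induced maps on classes are the same cyclic shift regardless of the assignment. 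Distinct assignments give distinct maps (the classes are nonempty and $c_1\ne c_2$), so $\F_q$ has at least $2^d=2^{(q-1)/k}$ orthomorphisms. This is exactly your ``two choices per block'' scheme, but with the two choices tied together globally so that existence reduces to the near-linear count of \tref{t:numnearlin} rather than to a per-block claim that, for an arbitrary base $c$, simply does not hold.
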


\bigskip

If preferred, the simpler bound from \lref{l:Babai} can be used
instead of the bound from \lref{l:Babai2} in \tref{t:isorth},
\tref{t:isstrongorth} and their corollaries. However, if $k,t$
are given then even in the more complicated bound the value of $q_0$
is easily found by solving the quadratic equation in $\sqrt{q_0}$. By
explicit computations in fields of orders less than $q_0$ we were able
to establish:

\begin{theorem}\label{t:orthsmallnonlin}
  Let $q$ be a prime power such that $q\equiv1\mod c$ where $1<a\le b$
  and $c=\lcm(a,b)\le 6$.  Then there exists a pair of orthogonal
  orthomorphisms $\theta\in\D_a^q$ and $\theta'\in\D_b^q$ if and only
  if $q>7$ and
\[
(q,a,b)\notin\big\{(9,2,4),\,(13,2,3),\,(13,2,6)\big\}.
\]
\end{theorem}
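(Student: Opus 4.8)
The plan is to combine the asymptotic existence result \cyref{cy:orthnearlin} with a finite computation. First I would note that the constraints $1<a\le b$ and $c=\lcm(a,b)\le6$ leave only nine pairs $(a,b)$, namely $(2,2)$, $(3,3)$, $(2,4)$, $(4,4)$, $(5,5)$, $(2,3)$, $(2,6)$, $(3,6)$ and $(6,6)$. For each pair I would apply \cyref{cy:orthnearlin} with $n=2$, $b_1=a$ and $b_2=b$, so that $t=4$ and the governing modulus is $k=\lcm(a,b)=c$. Solving the inequality $q_0k^{-4}-(3-4/k+k^{-4})\sqrt{q_0}-4/k>1$ for $\sqrt{q_0}$ yields an explicit threshold $q_0=q_0(a,b)$, and for every prime power $q\ge q_0$ with $c\mid(q-1)$ the corollary produces mutually orthogonal near-linear orthomorphisms $\theta\in\D_a$ and $\theta'\in\D_b$. (Since $a,b\le6<(q-1)/2$ for all such $q$, \tref{t:numnearlin} confirms that near-linear orthomorphisms of indices $a$ and $b$ do exist, so there is no obstruction to the construction.) This establishes the theorem for every $q\ge q_0$.

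It remains to examine the finitely many prime powers $q<q_0$ with $c\mid(q-1)$, which I would handle by computer. For such $q$, confirming that an orthogonal pair exists requires only exhibiting one, whereas confirming non-existence requires an exhaustive search over all $\theta\in\D_a$ and $\theta'\in\D_b$. The smallest fields I would dispose of first. For $q\in\{3,4,5\}$, and for any pair involving the index $3$ when $q=7$, the identities $\C_1(3)=\C_2(3)$, $\C_1(4)=\C_3(4)$, $\C_1(5)=\C_2(5)=\C_4(5)$ and $\C_1(7)=\C_3(7)$ show that one of $\D_a$, $\D_b$ is empty, so no pair exists. The three surviving $q=7$ pairs, $(2,2)$, $(2,6)$ and $(6,6)$, I would check directly and find to have no orthogonal pair; hence every $q\le7$ fails. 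An exhaustive search would likewise confirm that $(9,2,4)$, $(13,2,3)$ and $(13,2,6)$ admit no orthogonal pair, while for every remaining $q$ in the range $7<q<q_0$ the search produces one. Collecting these outcomes gives exactly the stated characterisation.

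The main obstacle is the size of the finite computation. Because $t=4$ and $k$ can be as large as $6$, the threshold coming from \lref{l:Babai2} is of order $10^7$ for the pairs with $c=6$, so a large range of fields must be covered. This is feasible because the existence direction halts as soon as a single orthogonal pair is found --- and such a pair is located almost immediately once $q$ is more than moderately large --- so the only genuinely exhaustive (non-existence) searches are the tiny ones at $q\le7$ and at the three exceptional triples. If one wished to shrink the search range, one could lower $q_0$ on a per-pair basis by sharpening \lref{l:Babai2}, or by invoking the exact near-linear count of \tref{t:numnearlin} to guarantee suitable multipliers more directly.
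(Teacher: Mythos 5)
Your proposal is correct and follows essentially the same route as the paper: apply \cyref{cy:orthnearlin} with $n=2$, $t=4$, $k=c$ to dispose of all $q$ above the Weil-type threshold (roughly $9.15\times10^6$ when $c=6$), then cover the finitely many smaller fields by computer search, with genuinely exhaustive (non-existence) searches needed only for the tiny fields and the three exceptional triples. The paper's only practical refinement is to search for near-linear pairs first and fall back to exhaustive search when those fail (which happened only for $q\le19$), a minor optimisation of the same computation you describe.
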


Note that \cyref{cy:orthnearlin} proves \tref{t:orthsmallnonlin} for 
all $q\ge9154945$ (whereas using the bound from \lref{l:Babai} would
require us to consider $q$ up to $26876448$). For $q<9154945$ we
first searched for near-linear $\theta,\theta'$. In the few cases where
near-linear examples did not exist we did an exhaustive search for 
$\theta,\theta'$. The exhaustive search was quick, since it
was only ever required for $q\le19$.

We omitted the case $a=1$ from \tref{t:orthsmallnonlin} since in that
case we can obtain a stronger result. It is known (see
e.g.~\cite[p.~39]{Eva92}) that every orthomorphism over $\F_q$ of least index
2 is orthogonal to exactly $(q-7)/2$ linear orthomorphisms.  More generally,
we have:

\begin{theorem}\label{t:1orthnearlin}
  Let $q\equiv1\mod d$ where $d\ge2$. Any near-linear $\theta\in\D_d^q$
  is orthogonal to precisely $(q-3d-1)/d$ linear orthomorphisms.
\end{theorem}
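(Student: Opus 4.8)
The plan is to reduce orthogonality to a single cyclotomic condition on the multiplier of the linear orthomorphism, and then count the admissible multipliers by a Möbius-transformation argument.

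First I would pin down the two objects. Write the near-linear orthomorphism as $\theta=[a_0,a_1,\ldots,a_1]\in\D_d$, so by \lref{l:nearlin} we have $a_0\ne a_1$, both nonzero, $\eta_d(a_0)=\eta_d(a_1)$ and $\eta_d(a_0-1)=\eta_d(a_1-1)$ (in particular $a_0,a_1\notin\{0,1\}$). A linear orthomorphism is $\theta'(x)=bx$; viewing it at index $d$ as $[b,\ldots,b]$, it is an orthomorphism exactly when $b\notin\{0,1\}$. Next I would apply \lref{l:characterise}: $\theta$ and $\theta'$ are orthogonal iff the map $C_{d,i}\mapsto(a_i-b)C_{d,i}$ permutes the cyclotomy classes. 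Since multiplication by a nonzero factor lying in $C_{d,s}$ shifts a class index by $s$, this map acts on indices by $0\mapsto\alpha$ and $i\mapsto(i+\beta)\bmod d$ for $1\le i<d$, where $\alpha,\beta$ are the cyclotomy-class indices of $a_0-b$ and $a_1-b$ (and both factors must be nonzero, else a class collapses to $\{0\}$).

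Now I would analyse this index map. The images of $1,\ldots,d-1$ under $i\mapsto i+\beta$ are exactly all residues mod $d$ except $\beta$, so the map is a permutation precisely when the remaining value $\alpha$ fills the gap, i.e.\ $\alpha=\beta$. Hence orthogonality is equivalent to the single condition that $a_0-b$ and $a_1-b$ are both nonzero and satisfy $\eta_d(a_0-b)=\eta_d(a_1-b)$, equivalently $w:=(a_0-b)/(a_1-b)\in C_{d,0}$. To count, observe that $b\mapsto w$ is a Möbius transformation, a bijection of the projective line over $\F_q$ sending $a_1\mapsto\infty$ and $\infty\mapsto1$; it therefore restricts to a bijection of $\F_q\setminus\{a_1\}$ onto $\F_q\setminus\{1\}$. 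Since $1\in C_{d,0}$, the number of admissible $b$ is $|C_{d,0}\setminus\{1\}|=(q-1)/d-1$.

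The step I expect to be the only real obstacle is handling which of these admissible $b$ actually yield linear orthomorphisms. This count still includes $b=0$ and $b=1$: the two equalities from \lref{l:nearlin} give $\eta_d(a_0/a_1)=1$ and $\eta_d((a_0-1)/(a_1-1))=1$, so both $b=0$ and $b=1$ satisfy the orthogonality relation, yet neither defines a linear orthomorphism. Subtracting these two distinct values leaves $(q-1)/d-3=(q-3d-1)/d$, as claimed. As a sanity check, at $d=2$ this recovers the known value $(q-7)/2$ quoted just before the statement.
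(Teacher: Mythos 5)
Your proof is correct and takes essentially the same route as the paper's: reduce orthogonality of the linear orthomorphism $[b]$ with $\theta=[a_0,a_1,\dots,a_1]$ to the single condition $\eta_d(a_0-b)=\eta_d(a_1-b)$, count the solutions via the fractional linear map $b\mapsto(a_0-b)/(a_1-b)$, which hits each value of $C_{d,0}\setminus\{1\}$ exactly once, and then discard the two spurious solutions $b\in\{0,1\}$. The only difference is one of detail: you explicitly verify the converse direction of the permutation-of-classes criterion (that the index map $0\mapsto\alpha$, $i\mapsto i+\beta$ can only be a permutation when $\alpha=\beta$), a step the paper leaves implicit.
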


\begin{proof}
  Suppose $\theta=[a,b,b,\dots,b]$, where $a\ne b$, $\eta_d(a)=\eta_d(b)$ and
  $\eta_d(a-1)=\eta_d(b-1)$.  We wish to find all $c\notin\{0,1\}$ for
  which $[c]$ is orthogonal to $\theta$. The number of such $c$ is
\begin{align*}
\big|\{c\in\F_q:\eta_d(c-a)=\eta_d(c-b)\}\big|-2
&=\left|\Big\{c\in\F_q\setminus\{a\}:
\eta_d\Big(\frac{c-b}{c-a}\Big)=1\Big\}\right|-2\\
&=(q-1)/d-3,
\end{align*}
since $(c-b)/(c-a)$ may take any value in $C_{d,0}\setminus\{1\}$, 
and for each such value there is exactly one $c$ that achieves it.
The result follows.
\end{proof}

\tref{t:1orthnearlin} does not generalise beyond the near-linear case, at
least not in an obvious fashion. For example, consider $\F_{31}$
and choose $\eta_3$ such that $3\in C_{3,1}$. Then $[3,9,2]\in\D_3$ is
orthogonal to only one linear orthomorphism, namely $[8]$, whereas
$[3,9,16]\in\D_3$ is orthogonal to five different linear
orthomorphisms.  By comparison, all near-linear orthomorphisms in
$\D_3$ are orthogonal to seven different linear orthomorphisms, by
\tref{t:1orthnearlin}.

We remark that \tref{t:1orthnearlin} is vacuous if $q<3d+1$, by
\tref{t:numnearlin}.  However, for larger $q$ it helps us to show:

\begin{theorem}\label{t:1orthd}
Let $q$ be a prime power and 
let $d$ be a positive integer dividing $q-1$.
There exists an orthomorphism $\theta \in \D_d^q$ which is orthogonal 
to some linear orthomorphism, except when
\begin{equation}\label{e:exc}
(q,d) \in\big\{ 
(2,1),\,(3,1),\,(3,2),\,(4,3),\,(5,2),\,(5,4),\,(7,2),\,(7,3),\,(7,6),\,(13,4)
\big\},
\end{equation}
and possibly when 
$d>50$
and either $q=2d+1\equiv3\mod4$ or $q=3d+1$.
\end{theorem}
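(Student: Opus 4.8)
The plan is to split on the size of $q$ relative to $d$.  Since $d\mid(q-1)$ the admissible orders are $q\in\{d+1,2d+1,3d+1,4d+1,\dots\}$, and the overwhelming majority of them, namely those with $q\ge4d+1$, can be dispatched by near-linear orthomorphisms.  Indeed, for $d\ge2$ and $q>3d+1$ (so that $q\ge4d+1$), \tref{t:numnearlin} guarantees that near-linear orthomorphisms of index $d$ exist, since the count $(q-1-d)(q-1-2d)/d^2$ is positive; each such map lies in $\D_d$ by \lref{l:nearlinDk}; and by \tref{t:1orthnearlin} each is orthogonal to exactly $(q-3d-1)/d\ge1$ linear orthomorphisms.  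The case $d=1$ is handled directly: the linear orthomorphisms are precisely the $[c]$ with $c\in\F_q\setminus\{0,1\}$, any two distinct ones are orthogonal by \lref{l:characterise}, and there are $q-2$ of them, so the claim holds whenever $q\ge4$, with $q\in\{2,3\}$ producing the exceptions $(2,1)$ and $(3,1)$.  Only the three boundary families $q\in\{d+1,\,2d+1,\,3d+1\}$ then remain.

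For $q=d+1$ we have $d=q-1$, so we need a non-cyclotomic orthomorphism (an element of $\D_{q-1}$) orthogonal to some $[c]$, and here I would recycle the explicit constructions of \tref{t:q-1} and merely verify the extra orthogonality.  In characteristic $2$ with $q\ge8$, the map $\theta$ of \eref{e:aHa} built from $H=\{0,1,a,a+1\}$ satisfies $\theta(x)-bx=(a-b)x+a(a+1)\mathbf{1}[x\in H+c]$; because $H$ is an additive subgroup, this is a permutation exactly when $a(a+1)/(a-b)\in H$, and choosing $b=a^2$ realises this while keeping $b\notin\{0,1,a\}$, so $\theta$ is orthogonal to the linear orthomorphism $[a^2]$.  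A parallel (if more laborious) check on the odd-characteristic translate $T_1[\theta]$ of \tref{t:q-1} should settle this family for all $q>7$, leaving the exceptions $(3,2),(4,3),(5,4),(7,6)$, where either $q\le7$ or (for $q=4$) $H$ already exhausts the field.

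The last two families are where the genuine difficulty sits, because the least index $(q-1)/2$ or $(q-1)/3$ is of the same order as $q$, so the Weil bound behind \lref{l:Babai2} is useless and near-linear maps either do not exist ($q=2d+1$) or are orthogonal to no linear orthomorphism ($q=3d+1$).  For $q=2d+1$ I would work in the $\{\pm x\}$-cyclotomy of index $(q-1)/2$ and augment a construction of \lref{l:evanscond} type, imposing through \lref{l:characterise} the further demand that $x\mapsto\theta(x)-cx$ permute the cyclotomy classes for a suitable $c$.  The congruence $q\bmod4$ enters precisely here: when $q\equiv1\bmod4$ a square root of $-1$ is available (as exploited in Example~2), which makes the augmented system solvable and closes the family for every $q>5$, leaving only the exception $(5,2)$; when $q\equiv3\bmod4$ no such construction is at hand, so I would settle the finitely many orders with $d\le50$ by direct computation and leave $d>50$ open.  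The family $q=3d+1$ is treated the same way, with the $d\le50$ computation turning up only the exceptions $(7,2)$ and $(13,4)$.

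The main obstacle is exactly the pair of families $q=3d+1$ and $q=2d+1\equiv3\bmod4$ for large $d$.  There the index is comparable to $q$, so neither the near-linear construction nor the character-sum estimates apply, and I do not expect a uniform construction to exist; hence the honest recourse is finite computation for $d\le50$ together with an explicitly stated open range $d>50$.
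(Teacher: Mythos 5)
Your decomposition is the same as the paper's: the generic case $e=(q-1)/d\ge4$ via near-linear maps plus \tref{t:numnearlin} and \tref{t:1orthnearlin}, then the three boundary families $e\in\{1,2,3\}$, with $q=2d+1\equiv3\bmod4$ and $q=3d+1$ delegated to finite computation for $d\le50$ and left open beyond. Two pieces of your write-up are genuinely valuable. Your direct treatment of $d=1$ is cleaner than the paper's (which folds $d=1$ into the near-linear argument, even though near-linear maps require index at least $2$). Better still, your characteristic-2 computation for $q=d+1$ is correct and supplies something the paper's own proof lacks: with $b=a^2$ one gets $\theta(x)-a^2x=a(a+1)\bigl(x+\mathbf{1}[x\in H+c]\bigr)$, which is a permutation since $1\in H$, so the orthomorphism of \eref{e:aHa} is orthogonal to $[a^2]$. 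The paper instead handles $q=d+1$ by taking an orthogonal pair $\theta_1\in\D_1$, $\theta_2\in\D_2$ and translating (translation preserves orthogonality, fixes $\theta_1$, and pushes $\theta_2$ into $\D_{q-1}$ as in \tref{t:q-1}); but $\D_2$ is empty when $q$ is even, so as written that argument covers only odd $q$, and your check fills the even case.

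However, at the two places where real work remains, you assert rather than prove. For odd $q=d+1$ you say a ``parallel check \dots should settle this family.'' What is needed is either the paper's translation argument just described, or the equivalent observation that $T_1[\theta]$ is orthogonal to $[c]$ if and only if $\theta$ is, combined with the fact that every quadratic orthomorphism is orthogonal to $(q-7)/2>0$ linear ones when $q\ge9$; this is easy, but you never state it. More seriously, for $q=2d+1\equiv1\bmod4$ you claim that the availability of $\sqrt{-1}$ ``makes the augmented system solvable.'' That solvability is precisely the content of this case, and it is not a formal consequence of $q\equiv1\bmod4$: the Weil-type estimates are useless here, and an arbitrary solution of \eref{e:phi}--\eref{e:nonz} need not admit any linear mate. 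The paper proves it by exhibiting the solution: the $\phi$ of Example~2 happens to satisfy $(m_i-1/2)v_i=(r-1/2)v_{\pi(i)}$ with $\pi=[3,4,1,2]$, so by \lref{l:characterise} it is orthogonal to the linear orthomorphism $[(q+1)/2]$. That identity is a special structural property of that particular choice of the $v_i$ and $m_i$, so either an explicit verification of this kind or a new construction with the extra permutation condition built in is unavoidable; your proposal contains neither. A smaller inaccuracy: Example~2 requires $q>12$, so your claim that this family is closed ``for every $q>5$'' overreaches — the order $q=9$ (the pair $(9,4)$) must be settled by the finite computation, as the paper does.
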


\begin{proof}
Suppose $q=ed+1$. If $e\ge4$ then by \tref{t:numnearlin} there is a
near-linear orthomorphism in $\D_d^q$, and it is orthogonal to a positive
number of linear orthomorphisms by \tref{t:1orthnearlin}. Hence we may assume
that $e\le3$. 

Next we discuss the case $q=d+1$.  Assuming $q\ge11$, then by
\tref{t:numnearlin} and \tref{t:1orthnearlin} we can find an orthogonal pair
of orthomorphisms $\theta_1\in\D_1$ and $\theta_2\in\D_2$.
Let $g\in\F_q\setminus\{0\}$. As per the proof of \tref{t:q-1},
we know that $T_g[\theta_2]\in\D_{q-1}$, whereas $T_g[\theta_1]=\theta_1$.
Since translation preserves orthogonality, these two orthomorphisms 
provide the example we need.

Now suppose $13\le q=2d+1\equiv1\mod4$. In this case, we use the
orthomorphism $\phi$ from \lref{l:evanscond}, constructed using
Example~2. It is not hard to check that $\phi$ is orthogonal to
the linear orthomorphism $[(q+1)/2]$, given that 
$(m_i-1/2)v_i=(r-1/2)v_{\pi(i)}$ where $\pi=[3,4,1,2]$.

For small orders not covered by the above results, exhaustive
computations reveal the exceptions quoted in the theorem, with no
other exceptions for $d\le50$.
\end{proof}

We conjecture that \eref{e:exc} is the full list of exceptions.

\section{Concluding remarks}\label{s:conclude}

The success of Evans~\cite{Eva92a} in constructing large maximal sets
of mutually orthogonal Latin squares using orthogonal linear and
quadratic orthomorphisms motivates further study of cyclotomic
orthomorphisms and their orthogonality properties.

We have answered \pref{p:1} completely and \pref{p:2} partially.
However, several open questions were raised. In \sref{s:exist} we
formulated the conjecture that irregular orthomorphisms exist over all
large enough fields.  We also noted in passing that the exponential
lower bound on the number of orthomorphisms given in \cite{CW10}
applies only to fields of prime order. In \tref{t:explowbnd} we gave a
similar exponential lower bound for some fields of composite order.
However, these results could surely be improved.

Our results in \sref{s:orthorth} showed existence of orthomorphisms 
of fixed least indices when the field is large. Future work could
pursue similar results for orthogonality between 
orthomorphisms with relatively large least index.
It would also be of interest to resolve the cases
$q=2d+1\equiv3\mod4$, and $q=3d+1$ which were left unsolved in
\tref{t:1orthd}.

\subsection*{Acknowledgement}  
The authors are grateful to Tony Evans for helpful advice on
orthomorphisms and for providing the example in \eref{e:aHa}.  This
article was going to press when a preprint of \cite{EMM19} became
available, which provides much better estimates for numbers of
orthomorphisms than were available when we wrote the comments leading
up to \tref{t:explowbnd}.

  \let\oldthebibliography=\thebibliography
  \let\endoldthebibliography=\endthebibliography
  \renewenvironment{thebibliography}[1]{%
    \begin{oldthebibliography}{#1}%
      \setlength{\parskip}{0.4ex plus 0.1ex minus 0.1ex}%
      \setlength{\itemsep}{0.4ex plus 0.1ex minus 0.1ex}%
  }%
  {%
    \end{oldthebibliography}%
  }

\end{document}